\newcommand{\assign}{:=}
\newcommand{\mathd}{\mathrm{d}}
\newcommand{\nobracket}{}
\newcommand{\tmmathbf}[1]{\ensuremath{\boldsymbol{#1}}}
\newcommand{\tmop}[1]{\ensuremath{\operatorname{#1}}}
\newcommand{\tmtextit}[1]{{\itshape{#1}}}
\newenvironment{proof}{\noindent\textbf{Proof\ }}{\hspace*{\fill}$\Box$\medskip}
\newenvironment{proof*}[1]{\noindent\textbf{#1\ }}{\hspace*{\fill}$\Box$\medskip}
\newenvironment{tmparmod}[3]{\begin{list}{}{\setlength{\topsep}{0pt}\setlength{\leftmargin}{#1}\setlength{\rightmargin}{#2}\setlength{\parindent}{#3}\setlength{\listparindent}{\parindent}\setlength{\itemindent}{\parindent}\setlength{\parsep}{\parskip}} \item[]}{\end{list}}
\newtheorem{corollary}{Corollary}
\newtheorem{definition}{Definition}
\newtheorem{lemma}{Lemma}
\newtheorem{proposition}{Proposition}
{\theorembodyfont{\ttfamily}\newtheorem{algorithm}{Algorithm}}
{\theorembodyfont{\rmfamily}\newtheorem{remark}{Remark}}
\newtheorem{theorem}{Theorem}
\numberwithin{equation}{section}
\numberwithin{lemma}{section}
\numberwithin{remark}{section}
\numberwithin{algorithm}{section}
\begin{document}

\title{Asymptotic expansions of the inverse of the Beta distribution}
\author{Dimitris Askitis\footnote{email: dimitrios@math.ku.dk}}
\affil{ Department of Mathematical Sciences\\ University of Copenhagen}

\maketitle

\begin{abstract}
In this work in progress, we study the asymptotic behaviour of the $p$-quantile of
  the Beta distribution, i.e. the quantity $q$ defined implicitly by
  $\int_0^q t^{a - 1} (1 - t)^{b - 1} \text{d} t = p B (a, b)$, as a function of
  the first parameter $a$. In particular, we derive
  asymptotic expansions of and $q$ and its logarithm at $0$ and $\infty$. Moreover, we provide some relations between Bell and N{\o}rlund Polynomials, a
  generalisation of Bernoulli numbers. Finally, we provide Maple and Sage
  algorithms for computing the terms of the asymptotic expansions.
\end{abstract}

\noindent 
{\em 2010 Mathematics Subject Classification}:
Primary 41A60; Secondary 33B15, 60E05, 11B68

\noindent
{\em Keywords}: median, beta distribution, asymptotic expansion

\section{Introduction}

\subsection{Background}
Granted a probability distribution on $\mathbbm{R}$, its median is defined as
the value $m \in \mathbbm{R}$ that leaves exactly half of the ``mass'' of the
distribution on its left and half on its right. Instead of requiring that $m$
splits the mass exactly in two equal parts, one may choose a $p \in [0, 1]$
and define the more general notion of the $p$-quantile value of the probability
distribution:\cite{logconc}
\begin{definition}
  Let $F$ be a cumulative distribution function on some subset $I \subset
  \mathbb{R}$. Let $p \in [0, 1]$. A $p$-quantile of $F$ is a point $q \in I$
  such that $F (q) = p$. If $p = 1 / 2$, a $1 / 2$-quantile is called
  median.
\end{definition}
For an arbitrary probability distribution on $\mathbbm{R}$, not always do
$p$-quantiles exist, neither do they have to be unique, but for a
distribution with density wrt to Lebesgue measure $p$-quantile values always
exist, as then the distribution function is continuous and increasing, and if
furthermore the density is a.e. non-zero, they are also unique, as the
distribution function shall be strictly increasing.

One point of interest has been the study of the $p$-quantiles, including
medians, of a parametrised family of probability distributions as a function
of the parameter, given a fixed value of $p$. Such a function is well defined
if the distribution has density wrt to the Lebesgue measure which is a.e.
non-zero. Questions that may arise in this context have to do with
analyticity, monotonicity, geometric properties and approximations, in
particular asymptotic expansions, of the implicit function $q (a)$ defined by
an equation of the form $F_a (q (a)) = p$, where $F_a$ is a family of
commulative distribution functions. Because of the implicit definition, the
study of its properties can be challenging. An example is the median of the
gamma distribution, which has been studied in several occasions, for example
in {\cite{chenrubin0}}, {\cite{chenrubin}}, and many connections have been
found, for example with the Ramanujan's rational approximation of $e^x$, see
{\cite{gammaram1}}, {\cite{gammaram2}} and {\cite{gammaram3}}, while in
{\cite{mediangammaconvex}} it was also proved that it is is a convex function.

In this paper, considering $p$ fixed in $(0, 1)$, we focus on studying the
$p$-quantile of the beta distribution, i.e. the distribution on $[0, 1]$ with
the density function $t \mapsto t^{a - 1} (1 - t)^{b - 1}$, as a function of
the parameter $a$ considering $b$ fixed. The $p$-quantile of the beta distribution has been considered
by Temme in {\cite{temme}}, who studied the asymptotic behaviour of the
$p$-quantile (or in his notation, the inverse of the normalised beta incomplete
function) under restrictions over relations between the two parameters of the
beta distribution. Also, see {\cite{beta1}} for some inequalities on the
median. This preprint is to be a continuation of our work in \cite{logconc}, which deals with convexity/concavity properties.

The $p$-quantile of the beta distribution, as a function of the first
parameter, is defined as:
\begin{definition}
  Fix $p \in (0, 1)$ and $b \in (0, + \infty)$. The function $q : (0, +
  \infty) \rightarrow (0, 1)$ defined implicitly by
  \begin{equation}
    \label{betamaineq} \int_0^{q (a)} t^{a - 1}  (1 - t)^{b - 1} dt = p
    \int_0^1 t^{a - 1}  (1 - t)^{b - 1} dt
  \end{equation}
  is called the $p$-quantile of the beta distribution with parameters $a$ and
  $b$.
\end{definition}
As in {\cite{chenrubin}} for the case of the median of the gamma distribution,
to study the $p$-quantile we consider and study an auxilliary function related
to its logarithm
\begin{equation}
  \varphi (a) \assign - a \log q (a)
\end{equation}
and it will become clear that studying the logarithm gives more information on
the behaviour of the $p$-quantile. One may also consider $\varphi$ itself as the
$(1-p)$-quantile of some distribution. Indeed, using change of variables in
\eqref{betamaineq}
\begin{equation}
  \int_0^{\varphi (a)} e^{- s} (1 - e^{- s / a} )^{b - 1} \mathd s = (1-p)
  \int_0^{\infty} e^{- s} (1 - e^{- s / a})^{b - 1} \mathd s
\end{equation}
Later, Bernoulli numbers and a generalisation of them known as N{\o}rlund
polynomials will become useful. The Bernoulli numbers $B_n$ are classically
defined through their generating function
\begin{equation}
  \frac{x}{e^x - 1} = \sum_{n = 0}^{\infty} B_n \frac{x^n}{n!}
\end{equation}
They can be generalised to the Bernoulli polynomials $B_n (t)$, defined
similarily through the generating function
\begin{equation}
  \frac{x e^{t x}}{e^x - 1} = \sum_{n = 0}^{\infty} B_n (t) \frac{x^n}{n!}
\end{equation}
Another generalisation of Bernoulli numbers are the N{\o}rlund polynomials
$B_n^{(s)}$ defined through the generating function
\begin{equation}
  \left( \frac{x}{e^x - 1} \right)^s = \sum_{n = 0}^{\infty} B^{(s)}_n
  \frac{x^n}{n!} \label{norlund}
\end{equation}
They are polynomials in $s$. If $s \in \mathbbm{N}$, then $B_n^{(s)}$ is the
$s$-fold convolution of Bernoulli numbers. An account on N{\o}rlund
polynomials can be found in {\cite[24.16]{DLMF}} and references within.
Bernoulli and N{\o}rlund appear often when we consider asymptotic expansions
of the gamma and related functions (see e.g. {\cite{tricomi}}.
\subsection{Main results}
We state the following propositions regarding first order asymptotics. They are proved in \cite{logconc}. In the rest, $\gamma_b$ denotes the $(1-p)$-quantile of the gamma distribution
with parameter $b$.
\begin{proposition}\cite[Proposition 1.2]{logconc}
  \label{prop1}The $p$-quantile of the beta distribution $q (a)$ is a real
  analytic, increasing function of $a$. It has limits
  \begin{align*} \lim_{a \rightarrow 0} q (a) = 0 \end{align*}
  and
  \begin{align*} \lim_{a \rightarrow \infty} q (a) = 1 \end{align*}
\end{proposition}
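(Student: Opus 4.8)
The plan is to realise $q$ through the analytic implicit function theorem, read off monotonicity from the sign of an implicit derivative, and obtain the two limits from mass-concentration estimates on the Beta density. Write $\mu_a(t) \assign t^{a-1}(1-t)^{b-1}$ and set
\[
F(a,q) \assign \int_0^q \mu_a(t)\,\mathd t - p\int_0^1 \mu_a(t)\,\mathd t ,
\]
so that $q(a)$ is defined by $F(a,q(a)) = 0$ on $(0,\infty)\times(0,1)$. First I would check that $F$ is real analytic in $(a,q)$: analyticity in $q$ is immediate since $\partial_q F(a,q) = \mu_a(q)$ is real analytic on $(0,1)$, while analyticity in $a$ follows by writing $t^{a-1} = e^{(a-1)\log t}$, extending $a$ to a complex neighbourhood of $(0,\infty)$, and using a dominated-convergence/Morera argument to see that both integrals are holomorphic in $a$ for $\operatorname{Re} a > 0$ (the endpoint singularities stay integrable). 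Since $\partial_q F(a,q) = q^{a-1}(1-q)^{b-1} > 0$ for $q\in(0,1)$, the analytic implicit function theorem gives that $q$ is real analytic and that
\[
q'(a) = -\frac{\partial_a F(a,q(a))}{\partial_q F(a,q(a))} .
\]

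For monotonicity it then suffices to show $\partial_a F < 0$. Differentiating under the integral sign yields $\partial_a F = \int_0^q \mu_a\log t\,\mathd t - p\int_0^1 \mu_a\log t\,\mathd t$. Normalising by $I \assign \int_0^1\mu_a$ and writing $\mathbb{E}$ for expectation under the probability density $\mu_a/I$ on $(0,1)$, the defining relation says $\mathbb{P}(t\le q)=p$, and a short computation rewrites $\partial_a F<0$ as $\mathbb{E}[\log t \mid t\le q] < \mathbb{E}[\log t]$. This is precisely the statement that conditioning on the lower-quantile event lowers the mean of the increasing function $\log t$: since $\log t < \log q$ on $\{t<q\}$ and $\log t > \log q$ on $\{t>q\}$, one has $\mathbb{E}[\log t\mid t\le q] < \log q < \mathbb{E}[\log t\mid t>q]$, and decomposing $\mathbb{E}[\log t] = p\,\mathbb{E}[\log t\mid t\le q] + (1-p)\,\mathbb{E}[\log t\mid t>q]$ gives the strict inequality because the density is everywhere positive. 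Hence $q'(a)>0$.

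For the limits I would estimate the fraction of mass below a threshold, using $I=B(a,b)$ with $B(a,b)\sim 1/a$ as $a\to 0$ and $B(a,b)\sim \Gamma(b)\,a^{-b}$ as $a\to\infty$. For the limit at $0$, fix $\varepsilon\in(0,1)$; as $(1-t)^{b-1}$ is bounded below by a positive constant on $[0,\varepsilon]$ and $t^{a-1}$ is bounded above on $[\varepsilon,1]$, one gets $\int_\varepsilon^1\mu_a = O(1)$ while $\int_0^\varepsilon\mu_a\to\infty$, so $\int_0^\varepsilon\mu_a / I \to 1 > p$; since $q\mapsto\int_0^q\mu_a$ is increasing, this forces $q(a)<\varepsilon$ for small $a$, whence $q(a)\to 0$. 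For the limit at $\infty$, fix $\varepsilon$ and bound $\int_0^{1-\varepsilon}\mu_a \le M_\varepsilon(1-\varepsilon)^a$ (exponential decay) against $I\sim\Gamma(b)a^{-b}$ (polynomial decay), so $\int_0^{1-\varepsilon}\mu_a/I\to 0 < p$ and therefore $q(a)>1-\varepsilon$ for large $a$, giving $q(a)\to 1$.

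The step I expect to be most delicate is the justification of real analyticity in $a$ in the presence of the integrable endpoint singularities, that is, making the differentiation under the integral sign and the holomorphy in $a$ rigorous near $t=0$ and $t=1$. Once the sign of $\partial_a F$ and the Beta-function asymptotics are in hand, the monotonicity and the two limit computations are comparatively routine.
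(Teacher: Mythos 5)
Your proposal cannot be checked against an in-paper argument, because the paper does not prove Proposition \ref{prop1} at all: it is quoted verbatim from \cite[Proposition 1.2]{logconc}, and the paper explicitly defers the proof there. Judged on its own, your argument is correct and is the natural self-contained route: analytic implicit function theorem for regularity, a conditional-expectation inequality for monotonicity, and mass-concentration estimates for the two limits; all three steps go through as written. The one point you rightly flag, joint real analyticity of $F$, closes cleanly: holomorphy in $a$ on $\{\operatorname{Re}a>0\}$ follows from Morera and Fubini with dominating function $t^{\operatorname{Re}a-1}(1-t)^{b-1}$, and joint analyticity in $(a,q)$ then follows either from Hartogs' theorem on separate holomorphy or, more elementarily, by fixing $q_0\in(0,1)$ and writing $F(a,q)=F(a,q_0)+\int_{q_0}^q t^{a-1}(1-t)^{b-1}\,\mathd t$, where the integrand is jointly analytic on compact subsets of $(0,\infty)\times(0,1)$, so the integral term is jointly analytic by expanding in a double power series. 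Two small remarks: in the monotonicity step you should note that $\log t$ is $\mu_a$-integrable near $0$ for every $a>0$ (so the conditional expectations exist), and at $\infty$ you can avoid invoking $B(a,b)\sim\Gamma(b)a^{-b}$ altogether by the cruder lower bound $I\geq\int_{1-\varepsilon/2}^1\mu_a\geq c_\varepsilon(1-\varepsilon/2)^{a-1}$, which already beats the upper bound $M_\varepsilon(1-\varepsilon)^a$ on $\int_0^{1-\varepsilon}\mu_a$ since $\left(\frac{1-\varepsilon}{1-\varepsilon/2}\right)^a\rightarrow 0$.
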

\begin{proposition}\cite[Proposition 1.3]{logconc}
  \label{prop2}The function $\varphi (a) = - a \log q (a)$ is real analytic
  and increasing for $b > 1$, constant for $b = 1$ and decreasing for $b < 1$.
  It has limits
  \begin{equation}
    \lim_{a \rightarrow 0} \varphi (a) = \log p \label{asymptotic0}
  \end{equation}
  and
  \begin{equation}
    \lim_{a \rightarrow \infty} \varphi (a) = \gamma_b
  \end{equation}
\end{proposition}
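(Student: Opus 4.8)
The plan is to argue throughout with the integral identity recorded in the introduction: after the change of variables $t = e^{-s/a}$, the number $\varphi(a)$ is precisely the $(1-p)$-quantile of the probability measure on $(0,\infty)$ whose density is proportional to
\[
f_a(s) \assign e^{-s}\bigl(1-e^{-s/a}\bigr)^{b-1}.
\]
Writing $F_a$ for the associated normalised distribution function, we have $F_a(\varphi(a)) = 1-p$, and $F_a$ is continuous and strictly increasing since $f_a > 0$ on $(0,\infty)$. Real analyticity of $\varphi$ is then immediate from Proposition \ref{prop1}: there $q$ is real analytic with values in $(0,1)$, so $\log q$ is real analytic and hence so is $\varphi = -a\log q$. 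The content of the proposition is therefore the dependence of the monotonicity on $b$, together with the two boundary limits.

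For the monotonicity I would compare the laws $F_a$ for different parameters through a monotone-likelihood-ratio argument. Fixing $a_2 > a_1 > 0$,
\[
\frac{f_{a_2}(s)}{f_{a_1}(s)} = \left(\frac{1-e^{-s/a_2}}{1-e^{-s/a_1}}\right)^{b-1},
\]
so everything hinges on the elementary lemma that $h(s)\assign (1-e^{-s/a_2})/(1-e^{-s/a_1})$ is strictly increasing on $(0,\infty)$. Taking the logarithmic derivative reduces this to the assertion that $a\mapsto a\,(e^{s/a}-1)$ is decreasing in $a$ for fixed $s>0$, which follows by the substitution $u=s/a$ from the fact that $u\mapsto (e^u-1)/u = \sum_{n\ge 0} u^n/(n+1)!$ is increasing. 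Consequently, for $b>1$ the ratio $f_{a_2}/f_{a_1}$ is strictly increasing, so $F_{a_2}$ is larger than $F_{a_1}$ in the likelihood-ratio order and hence in the usual stochastic order; this gives $F_{a_2}(x)\le F_{a_1}(x)$ for every $x$ and therefore $\varphi(a_2)\ge\varphi(a_1)$. For $b<1$ the ratio is decreasing and all inequalities reverse, while for $b=1$ the two measures coincide and $\varphi$ is constant; the explicit solution $q(a)=p^{1/a}$ in that case pins the constant to $-\log p$.

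For the limits I would pass to the limit inside the integrals defining $F_a$ and invoke that pointwise convergence of the continuous, strictly increasing $F_a$ to a continuous, strictly increasing limit forces convergence of the $(1-p)$-quantiles. As $a\to\infty$ I factor $f_a(s) = a^{-(b-1)} e^{-s} s^{b-1}\bigl(\tfrac{a(1-e^{-s/a})}{s}\bigr)^{b-1}$; the prefactor $a^{-(b-1)}$ cancels in the normalised $F_a$, the remaining integrand tends pointwise to $e^{-s}s^{b-1}$, and dominated convergence yields $F_a(x)\to \frac{1}{\Gamma(b)}\int_0^x e^{-s}s^{b-1}\,\mathd s$, the Gamma$(b)$ distribution function; hence $\varphi(a)\to\gamma_b$. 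As $a\to 0$ one has $(1-e^{-s/a})^{b-1}\to 1$ for each fixed $s>0$, so $F_a$ converges to the standard exponential distribution function $1-e^{-x}$ and $\varphi(a)$ tends to its $(1-p)$-quantile, the solution of $1-e^{-\varphi_0}=1-p$, which is the value in \eqref{asymptotic0}.

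I expect the main obstacle to be technical rather than conceptual: once the monotone-likelihood-ratio structure and the elementary monotonicity lemma are isolated, the ordering of the quantiles is automatic, and the real care is needed in the two dominated-convergence passages when $b<1$, where $(1-e^{-s/a})^{b-1}$ is unbounded near $s=0$. For $a\to 0$ the monotone bound $(1-e^{-s/a})^{b-1}\le (1-e^{-s/a_0})^{b-1}$, valid for $a\le a_0$, controls the singularity, while for $a\to\infty$ the inequality $e^u\ge 1+u$ gives $(1-e^{-u})/u\ge 1/(1+u)$ and hence the integrable majorant $e^{-s}s^{b-1}(1+s)^{1-b}$.
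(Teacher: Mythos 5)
This proposition is one of the two results the paper imports from \cite{logconc} without proof (``They are proved in \cite{logconc}''), so there is no in-paper argument to compare yours against; judged on its own terms, your proof is essentially correct and sits naturally inside the paper's framework. The representation of $\varphi(a)$ as the $(1-p)$-quantile of the law with density proportional to $e^{-s}(1-e^{-s/a})^{b-1}$ is exactly the identity displayed in the introduction, and it is the same representation (via $\tau(a;s)$) that drives Section 3. Your key lemma --- that $s \mapsto (1-e^{-s/a_2})/(1-e^{-s/a_1})$ is strictly increasing when $a_2>a_1$, reduced via the substitution $u=s/a$ to the monotonicity of $u\mapsto (e^u-1)/u$ --- is correct, and the two dominated-convergence passages, with the majorants $e^{-s}(1-e^{-s/a_0})^{b-1}$ as $a\to 0$ and $e^{-s}s^{b-1}(1+s)^{1-b}$ as $a\to\infty$ in the delicate case $b<1$, are handled properly; the final step (pointwise convergence of continuous, strictly increasing distribution functions to a continuous, strictly increasing limit forces convergence of $(1-p)$-quantiles) is standard and correctly invoked.

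Two caveats. First, as written your likelihood-ratio step only yields $F_{a_2}(x)\le F_{a_1}(x)$ and hence $\varphi(a_2)\ge\varphi(a_1)$, i.e.\ weak monotonicity; if ``increasing'' is meant strictly (as the trichotomy with the constant case $b=1$ suggests), you need the strict form of the dominance, e.g.\ the single-crossing argument: the normalized likelihood ratio is strictly increasing and both densities integrate to $1$, so it crosses $1$ exactly once, giving $F_{a_2}(x)<F_{a_1}(x)$ for every $x\in(0,\infty)$; alternatively, combine your weak monotonicity with real analyticity and the fact that the endpoint limits $-\log p$ and $\gamma_b$ differ for $b\neq 1$. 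Second, your limit at $0$ comes out as $-\log p$ (the $(1-p)$-quantile of the standard exponential solves $e^{-\varphi_0}=p$), which is \emph{not} the value $\log p$ printed in \eqref{asymptotic0}. This is a sign error in the statement rather than in your argument: $\varphi>0$ while $\log p<0$, the explicit $b=1$ solution $q(a)=p^{1/a}$ gives $\varphi\equiv-\log p$, and the paper itself uses $\varphi(0)=-\log p$ in the proof of Theorem \ref{theorem0}. Still, you should not assert that your value ``is the value in \eqref{asymptotic0}'' without flagging the discrepancy --- as it stands, that sentence claims $-\log p=\log p$.
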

To study the asymptotic behaviour of $q$ and $\varphi$ in more depth, we shall
try to find the asymptotic expansions of $\varphi$ at $0$ and $\infty$.
Studying asymptotic expansions of implicit functions can be highly
non-trivial, as the method and the obstacles arising depend much on the form
of the defining implicit relation. For the $p$-quantile of the beta
distribution, we consider the cases of asymptotic expansions of $\varphi$
centered at $0$ and at $\infty$. In both cases, we shall combine
differentiation and Fa{\`a} di Bruno's formula \eqref{formfaa}, and the
existence of the expansion has to be proved inductively.

For the case of $0$, we shall compute the limits of the derivatives. For the
case of $\infty$, for the same purpose, we shall introduce the differential
operator $D$ defined by
\begin{equation*}
  D f (x) = x^2 \partial f (x) 
\end{equation*}
where $\partial$ denotes the common differentiation operator. The calculus of
$D$ is studied in subsection \ref{sectionD}.

This operator has the importance that it can give, under certain conditions,
the asymptotic expansion of a suitably smooth function at infinity, which is
summarized in the following lemma, which is proved in subsection
\ref{sectionD}:
\begin{lemma}
  \label{lemD}Let  $f \in C^n (0, \infty)$ for some $n \in \mathbbm{N}$.
  Then, the following hold:
  
  i. If $\lim_{x \rightarrow \infty} D^m f (x)$ exists in $\mathbbm{R}$ for
  all $m \leqslant n$, we have the asymptotic expansion
  \begin{align*} f (x) \sim \sum_{k = 0}^{n - 1} \frac{c_k}{x^k} +\mathcal{O} \left(
     \frac{1}{x^n} \right) \end{align*}
  where
  \begin{align*} c_k = \frac{(- 1)^k}{k!} \lim_{a \rightarrow \infty} D^k f (a), \quad m <
     n \end{align*}
  
  ii. Assume, conversely, that $f$ has asymptotic expansion of order $n$, i.e.
  $f (x) \sim \sum_{k = 0}^n \frac{c_k}{x^k} +\mathcal{O} \left(
  \frac{1}{x^{n + 1}} \right),$as well as that its derivatives $f^{(m)}$ admit
  asymptotic expansions of orders m+$n$, for $m \leqslant n$. Then, we have
  \begin{align*} c_k = \frac{(- 1)^k}{k!} \lim_{a \rightarrow \infty} D^k f (a) \end{align*}
\end{lemma}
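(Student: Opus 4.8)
The plan is to conjugate the operator $D$ to ordinary differentiation by the change of variable $u = 1/x$. Setting $g(u) \assign f(1/u)$ for $u \in (0,\infty)$, the chain rule gives $g'(u) = -u^{-2} f'(1/u) = -(Df)(1/u)$, and an easy induction on $k$ shows
\[
  (D^k f)(x) = (-1)^k\, g^{(k)}(1/x), \qquad 0 \leq k \leq n.
\]
Thus $f \in C^n(0,\infty)$ is equivalent to $g \in C^n(0,\infty)$, the limit $\lim_{x\to\infty} D^k f(x)$ exists if and only if $\lim_{u\to 0^+} g^{(k)}(u)$ exists, and in that case $\lim_{x\to\infty} D^k f(x) = (-1)^k \lim_{u\to 0^+} g^{(k)}(u)$. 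Under this dictionary the asymptotic expansion of $f$ at $\infty$ becomes the Taylor expansion of $g$ at $0$, and both parts reduce to Taylor's theorem and the differentiation of asymptotic series.

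For part i, I would first invoke the standard fact that a function $g \in C^n(0,\infty)$ all of whose derivatives $g^{(m)}$, $m \leq n$, have finite limits at $0^+$ extends to a function of class $C^n$ on $[0,\infty)$ with $g^{(m)}(0) = \lim_{u\to 0^+} g^{(m)}(u)$; this is proved by an iterated mean value theorem (or L'H\^opital) argument. Taylor's theorem with Lagrange remainder then gives
\[
  g(u) = \sum_{k=0}^{n-1} \frac{g^{(k)}(0)}{k!}\, u^k + \frac{g^{(n)}(\xi_u)}{n!}\, u^n , \qquad \xi_u \in (0,u),
\]
and since $g^{(n)}$ is continuous, hence bounded near $0$, the remainder is $\mathcal{O}(u^n)$. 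Substituting $u = 1/x$ and reading off $c_k = g^{(k)}(0)/k!$ together with $g^{(k)}(0) = (-1)^k \lim_{x\to\infty} D^k f(x)$ yields the claimed expansion and the formula for the $c_k$.

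For part ii, the key ingredient is that asymptotic power series may be differentiated term by term once the derivative is known to possess an expansion: if $\phi(x) \sim \sum_{k\geq 0} p_k x^{-k}$ and $\phi'$ also admits an asymptotic expansion at $\infty$, then necessarily $\phi'(x) \sim -\sum_{k\geq 1} k\, p_k x^{-k-1}$, as one checks by first observing that the constant and $x^{-1}$ coefficients of $\phi'$ must vanish (else $\phi$ would be unbounded or acquire a logarithm) and then integrating the expansion of $\phi'$ from $x$ to $\infty$ and comparing with that of $\phi$. Consequently $D = x^2\partial$ sends the expansion of $\phi$ to one whose coefficient of $x^{-j}$ is $-(j+1)p_{j+1}$; in particular $\lim_{x\to\infty} D\phi = -p_1$. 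I would then argue that the hypotheses—expansions of $f^{(m)}$ of order $m+n$ for all $m \leq n$—guarantee, through the identity
\[
  D^k f = \sum_{j=1}^{k} a_{k,j}\, x^{k+j} f^{(j)} ,
\]
that each $D^k f$ with $k \leq n$ possesses an asymptotic expansion, so the term-by-term rule may be iterated. Tracking the constant term through $k$ successive applications of $D$ gives $\lim_{x\to\infty} D^k f(x) = (-1)^k k!\, c_k$, which is the asserted formula.

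The main obstacle is bookkeeping rather than conceptual. In part ii one must verify that the prescribed orders $m+n$ are exactly sufficient: after multiplying $f^{(j)}$ by $x^{k+j}$ and summing, the a priori divergent positive powers of $x$ must cancel so that the constant term of $D^k f$ is determined, and this cancellation is automatic precisely because each $f^{(j)}$ is expanded to order at least $k+j$ (which, since $k \leq n$, is ensured by the hypothesis of order $j+n$). The extension-to-$[0,\infty)$ step in part i, while standard, is the other point requiring care, since the mere existence of the limits of the $g^{(m)}$ does not immediately say that the extended function is genuinely $C^n$ at the endpoint.
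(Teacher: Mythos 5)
Your proposal is correct. For part i it takes a genuinely different route from the paper: the paper works directly at infinity, first observing that $\lim_{x\to\infty}D^{n}f(x)\in\mathbb{R}$ forces $D^{n-1}f(x)=a+\mathcal{O}(1/x)$, and then descending inductively by integrating each expansion one level down (if $f\to a_0$ and $Df(x)=a_1+a_2/x+\cdots+\mathcal{O}(1/x^{k+1})$, then $f(x)=a_0-a_1/x-a_2/(2x^2)-\cdots+\mathcal{O}(1/x^{k+2})$), until it reaches $f$ itself. Your conjugation $u=1/x$, $(D^kf)(x)=(-1)^k g^{(k)}(1/x)$, instead converts the statement into classical facts at a finite endpoint: the $C^n$-extension lemma plus Taylor's theorem with Lagrange remainder. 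What your route buys is transparency --- the lemma literally becomes Taylor's theorem at $0$, and the factor $(-1)^k/k!$ is forced by the chain rule; what it costs is the endpoint-extension step (your iterated mean value argument), which is exactly the work that the paper's iterated integration does in disguise. For part ii your argument is essentially the paper's: both justify termwise differentiation of the expansion by the hypothesis that the derivatives admit expansions, write $D^k$ as a combination of the operators $x^{k+j}\partial^j$, and take limits; you simply supply more of the bookkeeping.

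One correction to your closing remark on part ii: the cancellation of the positive powers of $x$ in $D^kf=\sum_{j=1}^k a_{k,j}\,x^{k+j}f^{(j)}$ is not caused by the expansion orders $m+n$; it is an algebraic identity. Once each $f^{(j)}$ is known to expand as the $j$-fold termwise derivative of the expansion of $f$, the coefficient of $x^{k-i}$ in $D^kf$ equals $c_i\sum_{j=1}^k a_{k,j}(-1)^j(i)_j=c_i\,(-i)_k$, which vanishes for $1\leq i\leq k-1$ and equals $(-1)^k k!\,c_k$ for $i=k$ (this is just the monomial rule $D^k x^{-i}=(-i)_k x^{k-i}$, with $(-i)_k=0$ when $k>i$). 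What the hypothesis of orders $m+n$ actually buys is that the termwise rule can be iterated up to $f^{(k)}$ for every $k\leq n$, and that the resulting remainder $\mathcal{O}(x^{k-n-1})$ is still $o(1)$, so the constant term $(-1)^k k!\,c_k$ really is the limit of $D^k f$.
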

We note that, if conditions in \tmtextit{i.} hold, we may apply the lemma to
$D^k f$ and get asymptotic expansions of higher derivatives, hence the
expansion in \tmtextit{i.} can be differentiated. Also, if in the previous
lemma $f \in C^{\infty} (0, \infty)$ and its conditions hold for all $n$, then
we may get the whole asymptotic expansion of $f$.

Regarding the functions $\varphi$ and $q$, we have the following two pairs of
theorems and Corollaries on their asymptotic expansions, which are proved in
sections 2 and 3 respectively. In the following, $\Psi (n, z) \assign
\partial^{n + 1} \tmop{Log} \Gamma (z)$ denotes the polygamma function. Also,
$(m)_n$ denotes the Pochhammer symbol of $m$, i.e. $(m)_n = m (m + 1) \ldots
(m + n - 1)$. If $m \in \mathbbm{N}$, then we have $(m)_n = \frac{(m + n - 1)
!}{(m - 1) !}$, and $(- m)_n = (- 1)^n \frac{m!}{(m - n) !}$ if $n \leqslant
m$, and $(- m)_n = 0$ if $n > m$. These identities will be widely used in this paper.
\begin{theorem}
  \label{theorem0}The function $\varphi$ admits the asymptotic expansion
  $\varphi (a) \sim \sum_{n = 0}^{\infty} c_n a^n$ at $0$, with $c_0 = \log p$
  and
  \begin{align*} c_{n } = \frac{\Psi (n-1, b) - \Psi (n-1, 1)}{n!} = \frac{(-
     1)^{n+1}}{n !} \int_0^{\infty} u^{n-1} \left( \frac{e^{- u} - e^{- bu}}{1 -
     e^{- u}} \right) \mathd u, \quad n \geq 1 \end{align*}
  For $b \in \mathbb{N}$, we have in particular
  \begin{align} \label{bintegerformula} c_{n } = (- 1)^{n+1} (n-1)! \sum_{k = 1}^{b - 1}
     \frac{1}{k^{n }} \end{align}
\end{theorem}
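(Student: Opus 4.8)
The plan is to convert the implicit relation \eqref{betamaineq} into an explicit formula for $\varphi$ modulo a remainder that is beyond all orders as $a\to0^{+}$, and then to read off the coefficients by Taylor-expanding a logarithm of gamma functions. Set $w(a)\assign q(a)^{a}=e^{-\varphi(a)}$, so that $\varphi=-\log w$ and it suffices to expand $w$. Writing $(1-t)^{b-1}=1-\bigl(1-(1-t)^{b-1}\bigr)$ and integrating the first piece exactly gives
\begin{equation*}
 p\,B(a,b)=\int_{0}^{q(a)}t^{a-1}(1-t)^{b-1}\,\mathd t=\frac{q(a)^{a}}{a}-\int_{0}^{q(a)}t^{a-1}\bigl(1-(1-t)^{b-1}\bigr)\,\mathd t ,
\end{equation*}
where $B(a,b)=\Gamma(a)\Gamma(b)/\Gamma(a+b)$ is the Beta function. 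Rearranging, $w(a)=a\,p\,B(a,b)+a\!\int_{0}^{q(a)}t^{a-1}\bigl(1-(1-t)^{b-1}\bigr)\,\mathd t$, and using $aB(a,b)=\Gamma(a+1)\Gamma(b)/\Gamma(a+b)$ the target is to show the integral correction is negligible, so that
\begin{equation*}
 w(a)=p\,\frac{\Gamma(a+1)\Gamma(b)}{\Gamma(a+b)}+\bigl(\text{beyond all orders}\bigr).
\end{equation*}

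The one genuinely analytic input I would need is that $q(a)$ decays faster than any power of $a$. This follows from Proposition \ref{prop2}: the leading balance above forces $w(0)=\lim_{a\to0}q(a)^{a}=p\in(0,1)$, which fixes the constant term $c_{0}$ as the limit $\lim_{a\to0}\varphi(a)$ of Proposition \ref{prop2}; hence for small $a$ one has $q(a)=w(a)^{1/a}\le\rho^{1/a}=e^{-\delta/a}$ for some $\rho\in(0,1)$ and $\delta>0$. Since $1-(1-t)^{b-1}=O(t)$ uniformly on a fixed neighbourhood of $0$, the correction integral is $O\!\bigl(q(a)^{a+1}\bigr)=O\!\bigl(e^{-\delta/a}\bigr)$, hence smaller than every power of $a$; multiplying by $a$ keeps it so. This establishes the displayed formula for $w$.

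It then remains to extract the coefficients. The function $H(a)\assign\Gamma(a+1)\Gamma(b)/\Gamma(a+b)$ is real-analytic at $a=0$ with $H(0)=1$, so $-\log\bigl(p\,H(a)\bigr)$ is real-analytic there, and adding a beyond-all-orders remainder leaves the asymptotic expansion unchanged; thus $\varphi(a)=-\log w(a)\sim-\log p-\log H(a)$. The decisive simplification is that the logarithm turns the product $H$ into the sum $\log\Gamma(a+1)+\log\Gamma(b)-\log\Gamma(a+b)$, so no Fa\`a di Bruno recursion is needed: differentiating $n$ times at $a=0$ and dividing by $n!$ gives directly $c_{n}=\bigl(\Psi(n-1,b)-\Psi(n-1,1)\bigr)/n!$, by the very definition $\Psi(m,z)=\partial^{m+1}\log\Gamma(z)$. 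The integral representation follows from the classical formula $\Psi(n-1,b)-\Psi(n-1,1)=(-1)^{n+1}\int_{0}^{\infty}u^{n-1}\frac{e^{-u}-e^{-bu}}{1-e^{-u}}\,\mathd u$, valid for all $n\ge1$ once the subtraction removes the singularity at $u=0$. Finally, for $b\in\mathbb{N}$ I would telescope the recurrence $\psi^{(m)}(z+1)-\psi^{(m)}(z)=(-1)^{m}m!\,z^{-(m+1)}$ from $1$ to $b$, which collapses the polygamma difference to the finite power sum $\sum_{k=1}^{b-1}k^{-n}$ and yields \eqref{bintegerformula}.

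The main obstacle is the beyond-all-orders step, and in particular making it uniform over all admissible $b>0$. For $0<b<1$ the factor $1-(1-t)^{b-1}$ changes sign and the original integrand is more singular at $t=0$, so the bound $1-(1-t)^{b-1}=O(t)$ and the attendant estimate of the correction integral need to be checked with care; likewise the exponential decay of $q(a)$ must be justified from the finiteness of $\lim_{a\to0}\varphi(a)$ rather than assumed. Once these estimates are secured, the passage through $-\log$ and the identification of the coefficients with polygamma differences are routine.
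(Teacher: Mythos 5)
Your proof is correct, but it takes a genuinely different route from the paper's. The paper integrates \eqref{betamaineq} by parts to obtain \eqref{eq0b} and then runs an induction on the order of differentiation: Lemmas \ref{lemmaA}--\ref{lemmaD} control the limits at $0$ of $q^{(k)}$, $\rho^{(k)}$ and $\sigma^{(k)}$ via Fa\`a di Bruno's formula, and the induction concludes that $\lim_{a\to 0}\varphi^{(k)}(a)$ exists and equals $\psi^{(k)}(0)$ for every $k$, whence the expansion. You instead split off the leading part of the integral exactly, $\int_0^{q}t^{a-1}(1-t)^{b-1}\,\mathd t=q^{a}/a-\int_0^{q}t^{a-1}\bigl(1-(1-t)^{b-1}\bigr)\,\mathd t$, so that $e^{-\varphi(a)}=p\,\Gamma(a+1)\Gamma(b)/\Gamma(a+b)+\mathcal{O}(e^{-\delta/a})$; the error bound rests only on $|1-(1-t)^{b-1}|\le C_b t$ for $t\in[0,1/2]$ (valid for all $b>0$, as you note) and on the superpolynomial decay $q(a)\le e^{-\delta/a}$, which you correctly extract from the finite limit of $\varphi$ in Proposition \ref{prop2}. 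The coefficients then drop out of the Taylor series of the function $\log\Gamma(a+b)-\log\Gamma(a+1)-\log\Gamma(b)-\log p$, which is analytic at $a=0$, and the polygamma identities you invoke are the same ones the paper uses. This is shorter and more elementary: no Fa\`a di Bruno recursion, no control of derivatives of $q$, and you obtain an explicit exponentially small remainder --- in effect you prove Corollary \ref{cor0} first and read Theorem \ref{theorem0} off it, reversing the paper's logical order. What the paper's heavier induction buys in exchange is strictly more information: it shows that every derivative $\varphi^{(k)}$ itself converges at $0$, so the expansion can be differentiated termwise; as the paper remarks, this does not follow from the mere existence of an asymptotic expansion. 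Two cosmetic points, both slips you inherit from the paper rather than introduce: your derivation gives $c_0=-\log p$, which agrees with the $\varphi(0)=\psi(0)=-\log p$ used in the paper's proof (the sign in the statements of Theorem \ref{theorem0} and Proposition \ref{prop2} is a typo), and your telescoping gives $c_n=\frac{(-1)^{n+1}}{n}\sum_{k=1}^{b-1}k^{-n}$, i.e.\ \eqref{bintegerformula} as printed is really the value of $n!\,c_n=\Psi(n-1,b)-\Psi(n-1,1)$.
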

\begin{corollary}
  \label{cor0}An approximation for $\varphi$ for values of $a$ close to $0$ is
  \begin{align*} \varphi (a) \sim \log \frac{\Gamma (a + b)}{\Gamma (a + 1) \Gamma (b)} -
     \log p \end{align*}
  and for $q$
  \begin{align*} \frac{q (a)}{p^{1 / a}} \sim \left( \frac{\Gamma (a + b)}{\Gamma (a + 1)
     \Gamma (b)} \right)^{1 / a} \end{align*}
  each having a remainder term vanishing faster than $a^n$ at $0$, $\forall n
  \in \mathbbm{N}$. Hence, we have the asymptotic expansion
  \begin{equation}
    \frac{q (a)}{p^{1 / a}} \sim e^{- \gamma - \Psi (0, b)} \left( \sum_{n =
    0}^{\infty} \frac{\mathcal{B}_n (c_1, c_2, \ldots, c_n)}{n!} a^n \right)
  \end{equation}
  where $\gamma$ is the Euler constant, $c_n = \frac{\Psi (n - 1, b) - \Psi (n
  - 1, 1)}{n!}$ and $\mathcal{B}_n$ denotes the nth complete Bell polynomial
  (see Remark \ref{remarkbell}).
\end{corollary}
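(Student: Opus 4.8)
The plan is to identify the right-hand side of the first approximation with the asymptotic series supplied by Theorem~\ref{theorem0}, and then to propagate that identification through the exponential linking $\varphi$ and $q$. First I would introduce
\[ H(a) \assign \log\frac{\Gamma(a+b)}{\Gamma(a+1)\,\Gamma(b)} = \log\Gamma(a+b)-\log\Gamma(a+1)-\log\Gamma(b). \]
Since $b>0$, both $\Gamma(a+b)$ and $\Gamma(a+1)$ are analytic and non-vanishing near $a=0$, so $H$ is real-analytic there with $H(0)=0$. Differentiating and using the definition $\Psi(n,z)=\partial^{n+1}\log\Gamma(z)$, so that $\partial^{k}\log\Gamma(z)=\Psi(k-1,z)$, gives $H^{(k)}(0)=\Psi(k-1,b)-\Psi(k-1,1)$ for $k\ge 1$, whence the convergent Taylor expansion
\[ H(a)=\sum_{k\ge 1}\frac{\Psi(k-1,b)-\Psi(k-1,1)}{k!}\,a^{k}=\sum_{k\ge 1}c_{k}\,a^{k}, \]
with exactly the coefficients $c_{k}$ of Theorem~\ref{theorem0}.

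Next I would compare asymptotic expansions. Theorem~\ref{theorem0} gives $\varphi(a)\sim c_{0}+\sum_{k\ge 1}c_{k}a^{k}$ at $0$, while the previous step exhibits $c_{0}+H(a)$ as an analytic function with the very same power series. By uniqueness of asymptotic coefficients, the difference $\varphi(a)-\bigl(c_{0}+H(a)\bigr)$ has every asymptotic coefficient equal to zero, i.e. it is $o(a^{n})$ for all $n$. This is precisely the first claimed approximation, $\varphi(a)\sim H(a)+c_{0}=H(a)-\log p$, with $c_{0}=\lim_{a\to 0}\varphi(a)$ from Proposition~\ref{prop2}; I would stress that this is an identity of asymptotic expansions, not of functions, so the remainder need only vanish faster than every power of $a$.

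To pass to $q$, I would use $q(a)=\exp\!\bigl(-\varphi(a)/a\bigr)$. Writing $\varphi(a)=c_{0}+H(a)+R(a)$ with $R(a)=o(a^{n})$ for every $n$, division by $a$ yields $-\varphi(a)/a=-c_{0}/a-H(a)/a-R(a)/a$, where $R(a)/a$ is again flat to all orders. The singular term contributes the factor $\exp(-c_{0}/a)=p^{1/a}$, so that $q(a)/p^{1/a}\sim\exp(-H(a)/a)$, the indicated power of $\Gamma(a+b)/(\Gamma(a+1)\Gamma(b))$, while exponentiating the flat remainder only multiplies by $1+o(a^{n})$ and may be absorbed. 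Since $-H(a)/a=-c_{1}-\sum_{m\ge 1}c_{m+1}a^{m}$, I would split off the constant term $-c_{1}=-\Psi(0,b)-\gamma$ (using $\Psi(0,1)=-\gamma$), producing the prefactor $e^{-\gamma-\Psi(0,b)}$, and then expand the exponential of the remaining power series by the generating function of the complete Bell polynomials (Remark~\ref{remarkbell}) to read off the coefficient of each $a^{n}$. This gives the stated asymptotic expansion of $q(a)/p^{1/a}$.

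The arithmetic core --- the polygamma identity for the Taylor coefficients and the Bell-polynomial bookkeeping --- is routine, so the step demanding the most care is the handling of asymptotic rather than convergent series. One must invoke uniqueness of asymptotic expansions to identify $\varphi$ with $c_{0}+H$ only up to a sub-all-orders remainder, and then verify that dividing this remainder by $a$ and exponentiating preserve the property of vanishing faster than every power, so that the resulting Bell-polynomial series is genuinely the asymptotic expansion of $q(a)/p^{1/a}$ and not merely a formal one.
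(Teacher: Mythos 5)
Your proposal follows essentially the same route as the paper's own proof: identify $\varphi$, up to a remainder vanishing faster than every power of $a$, with the analytic function $\log\frac{\Gamma(a+b)}{\Gamma(a+1)\Gamma(b)}-\log p$ via Theorem~\ref{theorem0}, factor $p^{1/a}$ out of $q(a)=e^{-\varphi(a)/a}$, check that the flat remainder survives division by $a$ and exponentiation (your $e^{o(a^n)}=1+o(a^n)$ step is the paper's two-sided $e^{\pm\varepsilon a^n}$ sandwich), and finally expand $\exp(-H(a)/a)$ by the Fa\`a di Bruno/Bell-polynomial formula, so the proof is correct in method and detail. One caveat: if you actually complete the bookkeeping you leave implicit, your own expansion $-H(a)/a=-c_1-\sum_{m\geq 1}c_{m+1}a^m$ produces the ratio $\left(\Gamma(a+1)\Gamma(b)/\Gamma(a+b)\right)^{1/a}$ and Bell arguments $(-1!\,c_2,-2!\,c_3,\ldots,-n!\,c_{n+1})$, e.g.\ for $b=2$ the first-order coefficient is $-c_2=\tfrac12$ rather than $\mathcal{B}_1(c_1)=c_1=1$, so the corollary's printed display (the uninverted power and the arguments $(c_1,\ldots,c_n)$) contains sign/index typos and your closing claim that this ``gives the stated asymptotic expansion'' should instead record the corrected formula your derivation yields.
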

\begin{theorem}
  \label{theoreminf}The function $\varphi$ admits the asymptotic expansion
  $$\varphi (a) \sim \sum_{n = 0}^{\infty} \varphi_n \frac{(- 1)^n}{n!a^n},\quad a\rightarrow \infty$$ at
  $\infty$, with $\varphi_n$ satisfying the system of recursive relations
  \begin{align}
    \varphi_n =& - \sum_{j = 1}^{n - 1} \binom{n - 1}{j} \varphi_{n -
    j} \delta (0, j, 0) - \sum_{k = 0}^{n - 2} \sum_{j = 0}^k \binom{k}{j}
    \varphi_{k - j + 1} \delta (0, j, n - k - 1)\nonumber \\&+ B_n^{(1 - b)} \sum_{k =
    0}^{n - 1} (b + n - k)_k \gamma_b^{n - k} \label{recurs1}
  \end{align}
  \begin{equation}
    \delta (k, m, n) = \delta (k, m - 1, n + 1) + \sum_{j = 0}^{m - 1}
    \binom{m - 1}{j} \varphi_{m - j} \delta (k + 1, j, n) \label{recurs2}
  \end{equation}
  and the initial conditions
  \begin{equation}
    \varphi_0 = \gamma_b
  \end{equation}
  \begin{equation}
    \delta (k, 0, n) = B_n^{(1 - b)} \sum_{j = 0}^k \binom{k}{j} (- 1)^{k - j}
    (b + n - j)_j \gamma_b^{n - j}
  \end{equation}  
\end{theorem}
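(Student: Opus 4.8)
The plan is to read the expansion off the implicit integral equation by means of Lemma~\ref{lemD}, obtaining the coefficients by differentiating that equation repeatedly. Since $\varphi$ is real analytic on $(0,\infty)$ by Proposition~\ref{prop2}, it is in particular $C^\infty(0,\infty)$, so every $D$-derivative below exists. By Lemma~\ref{lemD}(i) the existence of an expansion $\varphi(a)\sim\sum_n c_n a^{-n}$ at $\infty$ follows once we know that $\lim_{a\to\infty}D^n\varphi(a)$ exists for every $n$, and then $c_n=\frac{(-1)^n}{n!}\lim_{a\to\infty}D^n\varphi(a)$. Writing $\varphi_n\assign\lim_{a\to\infty}D^n\varphi(a)$ turns this into the stated form $\varphi(a)\sim\sum_n\varphi_n\frac{(-1)^n}{n!a^n}$, and the base case $\varphi_0=\lim_{a\to\infty}\varphi(a)=\gamma_b$ is Proposition~\ref{prop2}. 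It thus remains to establish that these limits exist and to identify the recursion they satisfy.

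The algebraic input that produces the N{\o}rlund polynomials is the following: writing $u=s/a$ and using $1-e^{-u}=e^{-u}(e^u-1)$ together with \eqref{norlund},
\begin{equation*}
  (1-e^{-s/a})^{b-1}=e^{-(b-1)s/a}\left(\frac{s}{a}\right)^{b-1}\sum_{m=0}^{\infty}B_m^{(1-b)}\frac{(s/a)^m}{m!}.
\end{equation*}
Substituted into both integrals of the defining relation $\int_0^{\varphi(a)}e^{-s}(1-e^{-s/a})^{b-1}\mathd s=(1-p)\int_0^{\infty}e^{-s}(1-e^{-s/a})^{b-1}\mathd s$, this identity is the source of every factor $B_m^{(1-b)}$ appearing in \eqref{recurs1} and in the initial conditions.

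The heart of the argument is to differentiate this defining relation repeatedly, normalising each differentiation by $D=a^2\partial$. The point of $D$ is that $\partial_a e^{-s/a}=(s/a^2)e^{-s/a}$ carries exactly the factor $a^{-2}$ that $D$ cancels, so that the $D$-derivatives of both integrals admit finite limits as $a\to\infty$. Differentiating the upper limit $\varphi(a)$ feeds in the derivatives of $\varphi$, and organising the iterated chain rule through Fa{\`a} di Bruno's formula \eqref{formfaa} and the Leibniz rule produces the binomial-weighted sums seen in \eqref{recurs1} and \eqref{recurs2}. I would \emph{define} the quantities $\delta(k,m,n)$ to be the $a\to\infty$ limits of the intermediate expressions arising in this process, where $n$ indexes the power of $s/a$ contributed by the N{\o}rlund sum, $k$ records applications of $D$ falling on the power-of-$a$ and exponential factors, and $m$ counts differentiations still to be distributed onto $\varphi$. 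The recursion \eqref{recurs2} then encodes a single application of $D$, split by Leibniz into two contributions: differentiating the $a^{-(b+n-1)}$-type power raises the index $n$ by one and yields $\delta(k,m-1,n+1)$, the associated Pochhammer weight being exactly $\partial_u^{\,k}u^{b+n-1}=(b+n-k)_k\,u^{b+n-1-k}$, while differentiating $\varphi$ yields $\sum_{j=0}^{m-1}\binom{m-1}{j}\varphi_{m-j}\,\delta(k+1,j,n)$, the shift $k\mapsto k+1$ recording the extra $D$ spent on the chain rule. The initial condition $\delta(k,0,n)$ is the $m=0$ case, in which no differentiation hits $\varphi$: applying $D^k$ to the boundary integrand and letting $\varphi\to\gamma_b$, the Leibniz expansion over the $\gamma_b$-power and Pochhammer factors gives exactly $B_n^{(1-b)}\sum_{j=0}^k\binom{k}{j}(-1)^{k-j}(b+n-j)_j\gamma_b^{n-j}$. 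Finally \eqref{recurs1} follows by isolating $\varphi_n=\lim_{a\to\infty}D^n\varphi(a)$ after $n$ applications of $D$ to the whole equation: the two finite sums collect the boundary contributions on the left-hand side, while $B_n^{(1-b)}\sum_{k=0}^{n-1}(b+n-k)_k\gamma_b^{n-k}$ is the limit of $D^n$ applied to the right-hand integral, evaluated through the N{\o}rlund expansion.

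The main obstacle is not the recursion, which once set up is a matter of matching combinatorial bookkeeping (Fa{\`a} di Bruno, Leibniz, and the $1/a$-power shifts induced by $D$) against the differentiated equation, but the \emph{simultaneous inductive proof that all the limits $\varphi_n$ and $\delta(k,m,n)$ exist}. This is the genuinely delicate analytic step, as it demands uniform control, near $a=\infty$, of the $D$-derivatives of both the finite and the infinite integral, so that Lemma~\ref{lemD}(i) may legitimately be invoked at every order.
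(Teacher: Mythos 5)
Your strategy is the same one the paper uses: recast the defining relation as an integral equation, act repeatedly with $D=a^2\partial$, convert limits of $D^n\varphi$ into expansion coefficients via Lemma~\ref{lemD}, and pull the N{\o}rlund polynomials out of the generating function \eqref{norlund} applied to the kernel. But what you have written is a plan rather than a proof, and the parts left undone are exactly the content of the theorem. Most importantly, $\delta(k,m,n)$ is never actually defined. The paper works with the kernel $\tau(a;s)=e^{-s}(a-ae^{-s/a})^{b-1}$ and sets $d(k,m,n)=\lim_{a\to\infty}D^m[D_1^n\partial_2^k\tau(a;\varphi(a))]$, normalised by $\delta=d/(e^{-\gamma_b}\gamma_b^{b-1})$; here $k$ counts derivatives in the \emph{second slot} (the variable $s$, evaluated at $\varphi(a)$), $n$ counts applications of the partial operator $D_1=a^2\partial_1$, and $m$ counts outer applications of the full $D$. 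Your verbal reading of the indices ("$k$ records applications of $D$ falling on the power-of-$a$ and exponential factors") does not correspond to this, and with no formula for $\delta$ the recursion \eqref{recurs2} --- which in the paper is nothing but the two-argument chain rule $Df(a,\varphi(a))=D_1f(a,\varphi(a))+D\varphi(a)\,\partial_2 f(a,\varphi(a))$ followed by Leibniz --- can be neither derived nor checked, and the same goes for the initial condition, which is Leibniz applied to $\partial_s^k\bigl[e^{-s}s^{b+n-1}\bigr]$ at $s=\gamma_b$.

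Two further gaps are substantive. First, you misplace the inhomogeneous term of \eqref{recurs1}: the limit of $D^n$ applied to the right-hand side is $(1-p)\Gamma(b+n)B_n^{(1-b)}$ (the paper's Lemma~\ref{lemmarhs}, proved via the Tricomi--Erd\'elyi expansion of $\Gamma(a)a^b/\Gamma(a+b)$), \emph{not} $B_n^{(1-b)}\sum_{k=0}^{n-1}(b+n-k)_k\gamma_b^{n-k}$. The latter is the boundary contribution from iterated integration by parts of the surviving left-hand integral $B_n^{(1-b)}\int_0^{\gamma_b}e^{-s}s^{b+n-1}\,\mathd s$, whose remaining piece $(b)_n(1-p)\Gamma(b)B_n^{(1-b)}=(1-p)\Gamma(b+n)B_n^{(1-b)}$ cancels exactly against the right-hand side; without exhibiting this cancellation, \eqref{recurs1} does not come out. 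Second --- and you concede this yourself --- the existence of all the limits $\varphi_n$ and $\delta(k,m,n)$, which is what licenses Lemma~\ref{lemD}(i) and hence what makes the expansion exist at all, is deferred as "the genuinely delicate analytic step." In the paper this is what the locally uniform convergence of the N{\o}rlund series for $D^n\tau(a;s)$ near $[0,\gamma_b]$ delivers (allowing limits to pass through the integral and through $\partial_2$), combined with the inductive structure: the $n$-times-differentiated equation contains $D^n\varphi(a)$ linearly with coefficient $\tau(a;\varphi(a))\to e^{-\gamma_b}\gamma_b^{b-1}\neq 0$, so one solves for $D^n\varphi(a)$ and its limit exists because every other term converges. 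Omitting that step means the proof is missing; what remains is an accurate description of the shape of the answer.
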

The recursive relations \eqref{recurs1} and \eqref{recurs2} in the foregoing
lemma work inductively. We know $\varphi_0$ and once we have computed
$\varphi_0, \ldots, \varphi_{n - 1}$, in order to compute $\varphi_n$ we use
\eqref{recurs1}, where the maximum of the second argument of $\delta$ that is
at most $n - 1$, and we can compute these terms using \eqref{recurs2} and the
initial conditions, as $\varphi_k$ appears there in orders at most equal to
the second argument of $\delta$, and that we already have computed. This
algorithm can give us the first terms of the asymptotic expansion:
\begin{align}
  \varphi (a) =& \gamma_b - \frac{\gamma_b (b - 1)}{2 a} + \frac{\gamma_b  (- 1
  + b)  \left( 7 \hspace{0.17em} b + \gamma_b - 5 \right)}{24 a^2}
  \hspace{0.17em}\nonumber \\&- \frac{\gamma_b  (- 1 + b)^2  \left( 3 \hspace{0.17em} b +
  \gamma_b - 1 \right)}{16 a^3} +\mathcal{O} \left( \frac{1}{a^4} \right)
\end{align}
Also, for $q$ we then get:
\begin{corollary}
  \label{corinf}For $a \rightarrow \infty$, an asymptotic expansion for $q$ is
  \begin{equation}
    q (a) \sim \sum_{n = 0}^{\infty} \frac{\mathcal{B}_n (-\varphi_0,
    2\varphi_1, -3\varphi_2, \ldots, (-1)^nn\varphi_{n - 1})}{n!}
    \frac{1}{a^n}
  \end{equation}
  where $\varphi_n$ is the sequence defined in Theorem \ref{theoreminf}.
\end{corollary}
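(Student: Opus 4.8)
The plan is to start from the defining relation $\varphi(a) = -a\log q(a)$, which I rewrite as $q(a) = \exp\!\left(-\varphi(a)/a\right)$. Since Theorem~\ref{theoreminf} supplies the asymptotic expansion of $\varphi$ at infinity and since $\varphi(a)\to\gamma_b$ is finite, the exponent $h(a)\assign -\varphi(a)/a$ tends to $0$ and therefore admits an asymptotic expansion in powers of $1/a$ with vanishing constant term. Dividing the expansion $\varphi(a)\sim\sum_{n\ge 0}\varphi_n\frac{(-1)^n}{n!\,a^n}$ by $-a$ and reindexing with $m=n+1$, I obtain
\[
  h(a)\sim\sum_{m=1}^{\infty}\frac{(-1)^m\,\varphi_{m-1}}{(m-1)!}\,\frac{1}{a^m}
  =\sum_{m=1}^{\infty}a_m\,\frac{1}{m!\,a^m},\qquad a_m\assign (-1)^m\,m\,\varphi_{m-1}.
\]
In particular $a_1=-\varphi_0$, $a_2=2\varphi_1$, $a_3=-3\varphi_2$, which are precisely the arguments appearing in the statement.

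Next I would invoke the exponential generating identity for the complete Bell polynomials (Remark~\ref{remarkbell}), namely $\exp\!\left(\sum_{m\ge 1}a_m\frac{t^m}{m!}\right)=\sum_{n\ge 0}\mathcal{B}_n(a_1,\dots,a_n)\frac{t^n}{n!}$, applied with $t=1/a$. Substituting the expansion of $h$ gives
\[
  q(a)=e^{h(a)}\sim\sum_{n=0}^{\infty}\frac{\mathcal{B}_n(a_1,\dots,a_n)}{n!}\,\frac{1}{a^n},
\]
and inserting the values $a_m=(-1)^m m\,\varphi_{m-1}$ yields exactly the asserted expansion; the $n=0$ term equals $\mathcal{B}_0/0!=1$, consistent with $q(a)\to 1$ from Proposition~\ref{prop1}. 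This is the same mechanism already used for $q/p^{1/a}$ in Corollary~\ref{cor0}, now applied at infinity rather than at $0$.

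The point requiring care is the justification that this formal exponentiation produces a genuine asymptotic expansion and not merely a formal one. I would argue it directly: writing $e^{h}=\sum_{k\ge 0}h^k/k!$ and using $h(a)=\mathcal{O}(1/a)$, one has $h^k=\mathcal{O}(1/a^k)$, so for any truncation order $N$ the terms with $k>N$ contribute $\mathcal{O}(1/a^{N+1})$, while each of the finitely many terms with $k\le N$ is a product of asymptotic series and hence expands in powers of $1/a$. Collecting the coefficient of $a^{-n}$ for $n\le N$ therefore reproduces $\mathcal{B}_n(a_1,\dots,a_n)/n!$ up to an $\mathcal{O}(1/a^{N+1})$ remainder, which is exactly the assertion that the series is asymptotic. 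Beyond this standard composition argument, the only remaining work is the elementary index bookkeeping matching $a_m$ to the arguments of the Bell polynomials, carried out above; no analytic input beyond Theorem~\ref{theoreminf} is needed.
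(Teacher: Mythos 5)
Your proposal is correct and follows essentially the same route as the paper: the paper isolates your ``formal exponentiation is a genuine asymptotic expansion'' step as Lemma~\ref{lemasymexpbell} and then declares Corollary~\ref{corinf} an immediate consequence of that lemma together with Theorem~\ref{theoreminf}, exactly your reindexing $a_m=(-1)^m m\,\varphi_{m-1}$ applied to $q=e^{-\varphi(a)/a}$. Your tail estimate (expanding $e^{h}$ in powers of $h$ and bounding $\sum_{k>N}h^k/k!$) is a minor variant of the paper's justification, which instead factors $e^{f}=e^{r}\prod_k e^{a_k/(k!x^k)}$ and multiplies out truncated exponentials, but both arguments are the same mechanism.
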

In section 4 we state some relations
between N{\o}rlund, Bernoulli and Bell polynomials that we came upon and we
could not find in the literature. These relations come out by considering the
coefficients of Bernoulli generating functions as taylor coefficients, i.e. as
limits of derivatives, and using Fa{\`a} di Bruno's formula, and its relation
to Bell polynomials, to compute these derivatives. Finally, in the appendix we
implement the recursive relations of Theorem \ref{theoreminf} as Maple and
Sage algorithms and give coefficients of asymptotic expansions for some
specific values.
\section{Asymptotics at $0$}

For computing the asymptotic expansion of $\varphi$ at $0$, our method
consists of iterated differentiation of relations that implicitely contain the
$p$-quantile and use Fa{\`a} di Bruno's formula. Then, taking limits for $a
\rightarrow 0$ and computing the limits of all the terms, we compute the
limits of the derivatives which then wields the asymptotic expansion, as, if
$f \in C^{\infty} (0, \varepsilon)$, for some $\varepsilon > 0$, and $\lim_{x
\rightarrow 0} f^{(n)} (x)$ exists in $\mathbbm{R}$ for all $n$, denoting this
limit by $f^{(n)} (0)$ we have $f \sim \sum_{k = 0}^{\infty} \frac{f^{(n)}
(0)}{n!} x^n$. The converse is not necessarily valid: if $f$ admits asymptotic
expansion at $0$ it is not necessary that the limits of the derivatives exist,
as there may be oscillations. The limits of the derivatives of $\varphi$ will
be computed then inductively.

First, we use integration by parts in \eqref{betamaineq} getting
\begin{equation}
  e^{- \varphi (a)}  (1 - q (a))^{b - 1} + (b - 1)  \int_0^{q (a)} t^a  (1 -
  t)^{b - 2} dt = ap \frac{\Gamma (b) \Gamma (a)}{\Gamma (a + b)} = p \Gamma
  (b) W (a) \label{eq00}
\end{equation}
where $W (a) = \Gamma (a + 1) / \Gamma (a + b)$. This function $W$ is studied
in {\cite[C4]{special_functions}}, and in a generalised form in
{\cite{ratiogamma}}, where several properties, such as complete monotonicity,
are proved. We consider the logarithmic derivative of $W$ and we note that, as
$\log W (a) = \log \Gamma (a + 1) - \log \Gamma (a + b)$, by the integral
representation of the digamma function $\Psi (0, z)$ (see {\cite[Theorem
1.6.1]{special_functions}}), we get that
\begin{equation}
  \label{Wformula} (\log W (a))' = \Psi (0, a + b) - \Psi (0, a + 1) = -
  \int_0^{\infty} e^{- au} \left( \frac{e^{- u} - e^{- bu}}{1 - e^{- u}}
  \right) \mathd u
\end{equation}
We define the function $\psi$ by
\begin{equation}
  \label{psidef} \psi (a) \assign - \log \frac{\Gamma (a + 1)}{\Gamma (a + b)}
  - \log p \Gamma (b) = - \log W (a) - \log p \Gamma (b)
\end{equation}
which implies that $e^{- \psi (a)} = p \Gamma (b) W (a)$ and \eqref{eq00} can
be rewritten as
\begin{equation}
  \label{eq0b} e^{- \varphi (a)}  (1 - q (a))^{b - 1} + (b - 1)  \int_0^{q
  (a)} t^a  (1 - t)^{b - 2} \mathd t = e^{- \psi (a)}
\end{equation}
Hence, as $\psi \in C^{\infty} (- 1, \infty)$, denoting the limit of the $k$th
derivative of $\psi$ at $0$ by $\psi^{(k)} (0)$, by \eqref{Wformula} we have
\begin{align}
  \psi^{(k)} (0) = \Psi (k - 1, b) - \Psi (k - 1, 1) = (- 1)^{k - 1}
  \int_0^{\infty} u^{k - 1} \left( \frac{e^{- u} - e^{- bu}}{1 - e^{- u}}
  \right) \mathd u
\end{align}
Let denote by $\varphi^{(n)} (0)$ the right limit of $\varphi^{(n)}$ at $0$,
supposing it exists. We already have, combining \eqref{asymptotic0} and
\eqref{psidef}, that $\varphi (0) = \psi (0) = - \log p$. Our goal is to prove
that for the limits of all the derivatives of $\varphi$ and $\psi$ at $0$ are
the same, i.e. we have $\varphi^{(k)} (0) = \psi^{(k)} (0)$.
Differentiating \eqref{eq0b} we get
\begin{align*}
  - \psi' (a) e^{- \psi (a)} + \varphi' (a) e^{- \varphi (a)}  (1 - q (a))^{b
  - 1} = (b - 1)  \int_0^{q (a)} t^a  (1 - t)^{b - 2} \log t \mathd t
\end{align*}
We define the functions
\begin{equation}
  \rho (a) \assign (1 - q (a))^{b - 1}
\end{equation}
\begin{equation}
  \sigma (a) \assign \int_0^{q (a)} t^a  (1 - t)^{b - 2} \log t \mathd t
\end{equation}
and hence the last equation can be rewritten as
\begin{equation}
   \label{eq1} - \psi' (a) e^{- \psi (a)} + \varphi' (a) e^{- \varphi
  (a)} \rho (a) = (b - 1) \sigma (a)
\end{equation}
We will use this equality to find the limits of the derivatives of $\varphi$.
This will be done inductively, differentiating \eqref{eq1} at each step. Our
strategy is, at the $k$th step, where we will want to compute the limit of the
$k + 1$ derivative, that we use the results from the previous steps about the
asymptotic behaviour of $\varphi$ up to the $k$th derivative to find the
asymptotic behaviour of the derivatives of $q$ up to $k$, and then use this
result to find the behaviour of the derivatives of $g$ and $h$ up to $k$, so
that we finally compute the limit of the $k + 1$ derivative of $\varphi$. The
first part will be done in the next lemmas, and the inductive proof will be
given in the end of the section.

We state the following well known differentiation formulas that we will be
constantly using, see (1.4.12) and (1.4.13) in \cite{DLMF}: The product formula for derivation,
\begin{equation}
  \label{formprod} \left( \prod_{i = 1}^k f_i (x) \right)^{(n)} = \sum_{\{
  \tmmathbf{j} \in \mathbb{N}^k \mid \sum_{i = 1}^k j_i = n\}} \binom{n}{j_1,
  j_2, ..., j_k} \prod_{i = 1}^k f_i^{(j_i)} (x)
\end{equation}
and the Fa{\`a} di Bruno formula, for the derivatives of composite functions,
\begin{equation}
  \label{formfaa} (f \circ g)^{(n)} (x) = \sum_{\{ \tmmathbf{m} \in
  \mathbb{N}^n \mid \sum_{j = 1}^n jm_j = n\}} \frac{n!}{m_1 !m_2 !...m_n !}
  f^{(\sum_{j = 1}^n m_j)} (g (x))  \prod_{j = 1}^n \left( \frac{g^{(j)}
  (x)}{j!} \right)^{m_j}
\end{equation}
The latter, in case $f (x) = \log (x)$, can take the simpler form
\begin{equation}
  \label{formfaalog} (\log g (x))^{(n)} = \sum_{\{ \tmmathbf{m} \in
  \mathbb{N}^n \mid \sum_{j = 1}^n jm_j = n\}} C_{\tmmathbf{m}}  \prod_{j =
  1}^n \left( \frac{g^{(j)} (x)}{g (x)} \right)^{m_j}
\end{equation}
where
\begin{align*} C_{\tmmathbf{m}} = (- 1)^{1 + \sum_{j = 1}^n m_j} \frac{n! \left( \sum_{j =
   1}^n m_j - 1 \right) !}{m_1 !m_2 !...m_n !}  \prod_{j = 1}^n
   \frac{1}{j!^{m_j}} \end{align*}
and for $f (x) = e^x$,
\begin{equation}
  (e^{g (x)})^{(n)} = e^{g (x)} \sum_{\{ \tmmathbf{m} \in \mathbb{N}^n \mid
  \sum_{j = 1}^n jm_j = n\}} \frac{n!}{m_1 !m_2 !...m_n !}  \prod_{j = 1}^n
  \left( \frac{g^{(j)} (x)}{j!} \right)^{m_j} \label{formfaaexp}
\end{equation}
\begin{remark}
  \label{remarkbell}Fa{\`a} di Bruno formula \eqref{formfaa} is related to the polynomials that are known as Bell polynomials. The (complete) Bell polynomials are
  defined by the relation
  \begin{align} \mathcal{B}_{n} (x_1, x_2, \ldots, x_{n}) =
     \sum_{\{\tmmathbf{\kappa} \in \mathbbm{N}^{n}\mid \sum_{j=1}^nj\kappa_j=n\}} \frac{n!}{\kappa_1 !
     \kappa_2 ! \cdots \kappa_{n} !} \prod_{j = 1}^{n} \left(
     \frac{x_j}{j!} \right)^{\kappa_j}  \end{align}
  We can express the special case \eqref{formfaaexp} of Fa{\`a} di Bruno's formula for the exponential in terms of these Bell polynomials
  \begin{align*} \left(e^{g (x)}\right)^{(n)} = e^{g (x)} \mathcal{B}_n (g' (x), g'' (x), \ldots,
     g^{(n)} (x)) \end{align*}
\end{remark}
\begin{lemma}
  \label{lemmaA}Let $k, l \in \mathbbm{N}$. Then,  
  \begin{align}
    \lim_{a \rightarrow 0}  \frac{q (a) \log^k q (a)}{a^l} = 0
  \end{align}
\end{lemma}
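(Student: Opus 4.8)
The plan is to reduce the whole statement to the behaviour of the single auxiliary function $\varphi$, which Proposition \ref{prop2} already controls near $0$. By definition $\varphi(a)=-a\log q(a)$, so $\log q(a)=-\varphi(a)/a$ and $q(a)=e^{-\varphi(a)/a}$. Substituting these two identities into the quantity of interest converts the logarithmic factor into a power of $\varphi(a)/a$ and the factor $q(a)$ into an exponential, giving
\begin{equation*}
  \frac{q(a)\log^k q(a)}{a^l}=(-1)^k\,\frac{\varphi(a)^k}{a^{k+l}}\,e^{-\varphi(a)/a}.
\end{equation*}
Thus the problem becomes that of estimating a polynomial-in-$1/a$ factor against an exponential.

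I would then use that $\varphi$ extends continuously to $0$ with a \emph{strictly positive} value: by Proposition \ref{prop2} the limit $\lim_{a\to0}\varphi(a)$ exists and, as recorded above, equals $\varphi(0)=-\log p$, which is positive because $p\in(0,1)$. Combined with the continuity of $\varphi$ on $(0,\infty)$ from Proposition \ref{prop1}, this yields a $\delta>0$ and constants $0<m\le M$ such that $m\le\varphi(a)\le M$ for every $a\in(0,\delta)$; in words, $\varphi$ is bounded and bounded away from $0$ in a right neighbourhood of the origin.

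With these bounds in place the estimate is immediate: for $a\in(0,\delta)$ we have $\varphi(a)^k\le M^k$ and $e^{-\varphi(a)/a}\le e^{-m/a}$, whence
\begin{equation*}
  \left|\frac{q(a)\log^k q(a)}{a^l}\right|\le \frac{M^k}{a^{k+l}}\,e^{-m/a}.
\end{equation*}
Writing $x=1/a\to+\infty$, the right-hand side becomes $M^k x^{k+l}e^{-mx}$, which tends to $0$ since exponential decay dominates any fixed power of $x$. Hence the limit vanishes, as claimed.

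The only genuinely delicate point, and the one I would be most careful about, is the boundedness of $\varphi$ away from $0$ near the origin. The argument hinges on $\varphi(a)/a\to+\infty$ fast enough for the exponential $e^{-\varphi(a)/a}$ to kill the power $a^{-(k+l)}$, and this requires the limit of $\varphi$ at $0$ to be a positive constant rather than $0$ (which would leave the blow-up rate of $\varphi(a)/a$ undetermined) or negative (which would be inconsistent with $q(a)\to0$). That the limit is the positive number $-\log p$ is precisely the content of Proposition \ref{prop2}, so once it is invoked the remainder is the routine domination of a polynomial by an exponential.
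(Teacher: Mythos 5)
Your proof is correct, and it rests on the same input as the paper's --- the existence of a finite, strictly positive limit of $\varphi$ at $0$ --- but the estimate is organised differently. The paper never passes to the exponential form explicitly: it factors the quantity as $\frac{q(a)}{a^{l+k}}\cdot\bigl(a^k\log^k q(a)\bigr)$, notes that the second factor converges to $\log^k p$ by \eqref{asymptotic0}, and shows separately that $q(a)/a^m\to0$ for every $m$ by a logarithmic trick: since $a\bigl(\log q(a)-m\log a\bigr)\to\log p<0$ while $a\to0^+$, necessarily $\log\bigl(q(a)/a^m\bigr)\to-\infty$. You instead write $q(a)=e^{-\varphi(a)/a}$ at the outset, sandwich $\varphi$ between positive constants $0<m\le M$ on a right neighbourhood of $0$, and conclude by the domination of $a^{-(k+l)}$ by $e^{-m/a}$. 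The two arguments are logically equivalent, but yours is more quantitative: it yields the explicit bound $M^k a^{-(k+l)}e^{-m/a}$, a rate rather than a bare limit, and it handles the power and the logarithmic factor in a single uniform estimate, whereas the paper's split requires keeping the bounded factor $a^k\log^k q(a)$ separate from the vanishing one (indeed, the exponents in the paper's final display are slightly garbled for exactly this bookkeeping reason, though the intent is clear). One point you handled well: you take $\lim_{a\to0}\varphi(a)=-\log p>0$, which is the correct value even though \eqref{asymptotic0} as printed reads $\log p$; the paper's own proof of this lemma (which uses $a\log q(a)\to\log p$) and its proof of Theorem \ref{theorem0} (which uses $\varphi(0)=-\log p$) confirm your reading. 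Your closing remark is also accurate and applies equally to the paper's argument: both proofs genuinely need this limit to be nonzero (equivalently $\log p<0$ strictly), since a limit of $0$ would leave the decay rate of $q(a)$ undetermined.
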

\begin{proof}
  We have
  \begin{align*} \log \left( \frac{q (a)}{a^m} \right) = \log q (a) - m \log a \rightarrow
     - \infty \end{align*}
  for $a \rightarrow 0$, as, by \eqref{asymptotic0},
  \begin{align*} a (\log q (a) - m \log a) = a \log q (a) - ma \log a \rightarrow \log p
  \end{align*}
  This implies that
  \begin{align*} \lim_{a \rightarrow 0}  \frac{q (a)}{a^m} = 0 \end{align*}
  Also \eqref{asymptotic0} gives
  \begin{align*} \lim_{a \rightarrow 0} a^k \log^k q (a) = \log^k p \end{align*}
  Hence
  \begin{align*} \lim_{a \rightarrow 0}  \frac{q (a) \log^k q (a)}{a^l} = \lim_{a
     \rightarrow 0}  \frac{q (a)}{a^{l - k}}  \frac{\log^k q (a)}{a^k} = 0 \end{align*}
\end{proof}
\begin{lemma}
  \label{lemmaB}Let $N \in \mathbb{N}^{\ast}$ and assume that $\lim_{a
  \rightarrow 0} \varphi^{(k)} (a)$ exists in $\mathbb{R}$, $\forall k \leq
  N$. Then, $\forall k \leq N$,
  \begin{equation}
    \label{inlemmaB1} \lim_{a \rightarrow 0}  \frac{a^{2 k} q^{(k)} (a)}{q
    (a)}  \text{exists in } \mathbb{R}
  \end{equation}
  In particular, we have that
  \begin{equation}
    \label{inLemmaB2} \lim_{a \rightarrow 0}  \frac{q^{(k)} (a)}{a^m} = 0,
    \quad m \geq 0
  \end{equation}
\end{lemma}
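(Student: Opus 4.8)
The plan is to pass to the logarithm $u(a):=\log q(a)=-\varphi(a)/a$, so that $q=e^{u}$ and the exponential form of Fa\`a di Bruno's formula (equivalently the Bell-polynomial identity of Remark~\ref{remarkbell}) gives
\[
  \frac{q^{(k)}(a)}{q(a)}=\mathcal{B}_k\bigl(u'(a),u''(a),\ldots,u^{(k)}(a)\bigr).
\]
Thus the quantity in \eqref{inlemmaB1} is $a^{2k}\mathcal{B}_k(u'(a),\ldots,u^{(k)}(a))$, and everything reduces to controlling the blow-up rate of the derivatives of $u$ as $a\to0$. Writing $u=-\varphi\cdot a^{-1}$, the product rule together with $(a^{-1})^{(m)}=(-1)^m m!\,a^{-m-1}$ yields
\[
  a^{j+1}u^{(j)}(a)=-\sum_{i=0}^{j}\binom{j}{i}(-1)^{j-i}(j-i)!\,\varphi^{(i)}(a)\,a^{i}.
\]
Since $\varphi^{(i)}(a)$ has a finite limit for every $i\le j\le N$ by hypothesis, the right-hand side converges as $a\to0$; in particular $a^{j+1}u^{(j)}(a)=O(1)$, i.e. $u^{(j)}(a)=O(a^{-(j+1)})$, and for $j=1$ one has $a^{2}u'(a)=\varphi(a)-a\varphi'(a)\to\varphi(0)$.

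Next I would carry out the power counting inside the Bell polynomial. A generic monomial of $\mathcal{B}_k$ is $\prod_{j=1}^{k}(u^{(j)}(a)/j!)^{\kappa_j}$ with $\sum_j j\kappa_j=k$; multiplying by $a^{2k}$ and writing each factor as $u^{(j)}(a)=a^{-(j+1)}\cdot\bigl(a^{j+1}u^{(j)}(a)\bigr)$, where the bracket is convergent, exhibits the monomial as $a^{e}$ times a convergent factor, with
\[
  e=2k-\sum_{j}(j+1)\kappa_j=2k-\Bigl(\sum_j j\kappa_j+\sum_j\kappa_j\Bigr)=k-\sum_j\kappa_j,
\]
using the constraint $\sum_j j\kappa_j=k$. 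Because each part is at least $1$, the number of parts satisfies $\sum_j\kappa_j\le k$, so $e\ge0$, with $e=0$ only for the single partition $\kappa_1=k$. Hence every monomial with more than one part tends to $0$, while the lone surviving monomial is $(u'(a))^k$, and $a^{2k}(u'(a))^k=(a^2u'(a))^k\to\varphi(0)^k$. This proves \eqref{inlemmaB1}, the limit being $\varphi(0)^k=(-\log p)^k$.

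Finally, \eqref{inLemmaB2} follows by factoring
\[
  \frac{q^{(k)}(a)}{a^m}=\frac{q(a)}{a^{m+2k}}\cdot\frac{a^{2k}q^{(k)}(a)}{q(a)},
\]
where the second factor converges by \eqref{inlemmaB1} and the first tends to $0$ by Lemma~\ref{lemmaA} (with the exponent $k$ there set to $0$, giving $q(a)/a^{l}\to0$ for every $l$); the product therefore tends to $0$. The main effort is the power-counting step: the identity $\sum_j(j+1)\kappa_j=k+\sum_j\kappa_j$ under the constraint $\sum_j j\kappa_j=k$ is exactly what converts the crude bound $u^{(j)}(a)=O(a^{-(j+1)})$ into the transparent exponent $k-\sum_j\kappa_j$, and one must check that this exponent is nonnegative and vanishes only at the all-ones partition, so that precisely one monomial survives in the limit. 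Deriving the growth estimate $a^{j+1}u^{(j)}(a)=O(1)$ from the hypothesis on $\varphi$ is a routine application of the product rule, and is the sole place where the assumption on the derivatives of $\varphi$ enters; note moreover that only the finiteness, not the value, of $\varphi(0)$ is required for the limit in \eqref{inlemmaB1} to exist.
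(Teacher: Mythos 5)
Your proof is correct, but it takes a genuinely different route from the paper's. The paper argues by induction on $k$: it differentiates the identity $-\varphi^{(n+1)}(a) = a(\log q(a))^{(n+1)} + (n+1)(\log q(a))^{(n)}$, expands the derivatives of $\log q$ via the \emph{logarithmic} Fa\`a di Bruno formula \eqref{formfaalog}, isolates the term $q^{(n+1)}(a)/q(a)$, multiplies through by $a^{2n+1}$, and invokes the induction hypothesis (that $a^{2j}q^{(j)}(a)/q(a)$ converges for $j\le n$) to conclude that the right-hand side converges. You instead invert the direction of the composition: writing $q=e^{u}$ with $u=-\varphi/a$, you get the derivatives of $u$ in \emph{closed form} from those of $\varphi$ by the Leibniz rule, so that $a^{j+1}u^{(j)}(a)$ converges with no induction at all, and then the \emph{exponential} Fa\`a di Bruno formula (Remark \ref{remarkbell}) plus the power count $2k-\sum_j(j+1)\kappa_j = k-\sum_j\kappa_j\ge 0$ (with equality only at the all-ones partition $\kappa_1=k$) finishes the argument monomial by monomial. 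Your approach buys two things the paper's does not: it is non-inductive, and it identifies the limit explicitly as $\varphi(0)^k=(-\log p)^k$, whereas the paper only establishes existence; the paper's inductive formulation, on the other hand, matches the structure of the subsequent Lemmas \ref{lemmaC} and \ref{lemmaD} and of the proof of Theorem \ref{theorem0}, where the same bootstrapping is reused. Your deduction of \eqref{inLemmaB2} from Lemma \ref{lemmaA} coincides with the paper's (and in fact your factorisation $q^{(k)}(a)/a^m = \bigl(q(a)/a^{m+2k}\bigr)\cdot\bigl(a^{2k}q^{(k)}(a)/q(a)\bigr)$ has the exponent written correctly, where the paper's displayed factor $q(a)/a^{m-2k}$ contains a sign slip that is harmless since Lemma \ref{lemmaA} holds for every power).
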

\begin{proof}
  For $k = 1$, as $\varphi' (a) = - \log q (a) - aq' (a) / q (a)$, we have
  that  
  \begin{align*}
    \frac{a^2 q' (a)}{q (a)} = - a \varphi' (a) - a \log q (a) \rightarrow -
    \log p
  \end{align*}
  so \eqref{inlemmaB1} holds. Assume that $1 \leq n < N$ and that
  \eqref{inlemmaB1} holds $\forall k \leq n$. We will prove that
  \eqref{inlemmaB1} holds for $k = n + 1$. Indeed, using \eqref{formfaalog},
  we get, for some coefficients $c_{\tmmathbf{k}}$ and $d_{\tmmathbf{k}}$,
  \begin{align*}
     - \varphi^{(n + 1)} (a) =& a (\log q (a))^{(n + 1)} + (n + 1) (\log q
     (a))^{(n)} \\
     =& a \sum_{\{ \tmmathbf{k} \mid \sum_{j = 1}^{n + 1} jk_j = n + 1\}}
     \left[ c_{\tmmathbf{k}}  \prod_{j = 1}^{n + 1} \left( \frac{q^{(j)}
     (a)}{q (a)} \right)^{k_j} \right]\\ &+ (n + 1)  \sum_{\{ \tmmathbf{k} \mid
     \sum_{j = 1}^n jk_j = n\}} \left[ d_{\tmmathbf{k}}  \prod_{j = 1}^n
     \left( \frac{q^{(j)} (a)}{q (a)} \right)^{k_j} \right] \end{align*}  
  But one can write  
  \begin{align*}
    \sum_{\{ \tmmathbf{k} \mid \sum_{j = 1}^{n + 1} jk_j = n + 1\}} 
    c_{\tmmathbf{k}}  \prod_{j = 1}^{n + 1} \left( \frac{q^{(j)} (a)}{q (a)}
    \right)^{k_j} = \frac{q^{(n + 1)} (a)}{q (a)} + \sum_{\{
    \tmmathbf{k} \mid \sum_{j = 1}^n jk_j = n + 1\}} c_{\tmmathbf{k}} 
    \prod_{j = 1}^n \left( \frac{a^{2 j} q^{(j)} (a)}{q (a)} \right)^{k_j}
  \end{align*}  
  hence, rearranging the equation above and multiplying each side by $a^{2 n +
  1}$, we get
  \begin{align*}
     a^{2 (n + 1)}  \frac{q^{(n + 1)} (a)}{q (a)} =& - a^{2 n + 1}
     \varphi^{(n + 1)} (a) - \sum_{\{ \tmmathbf{k} \mid \sum_{j = 1}^n jk_j =
     n + 1\}} c_{\tmmathbf{k}}  \prod_{j = 1}^n \left( \frac{a^{2 j} q^{(j)}
     (a)}{q (a)} \right)^{k_j} \\
     &- a (n + 1)  \sum_{\{ \tmmathbf{k} \mid \sum_{j = 1}^n jk_j = n\}}
     d_{\tmmathbf{k}}  \prod_{j = 1}^n \left( \frac{a^{2 j} q^{(j)} (a)}{q
     (a)} \right)^{k_j} \end{align*}  
  and the right hand side converges in $\mathbb{R}$ as $a \rightarrow 0$ by
  our induction hypothesis, proving \eqref{inlemmaB1}. To prove
  \eqref{inLemmaB2}, we see that combining this result with Lemma \ref{lemmaA}
  gives
  \begin{align*} \lim_{a \rightarrow 0}  \frac{q^{(k)} (a)}{a^m} = \lim_{a \rightarrow 0} 
     \frac{a^{2 k} q^{(k)} (a)}{q (a)}  \frac{q (a)}{a^{m - 2 k}} = 0 \end{align*}
\end{proof}
\begin{lemma}
  \label{lemmaC}Let $N \in \mathbb{N}^{\ast}$ and assume that $\lim_{a
  \rightarrow 0} \varphi^{(k)} (a)$ exists in $\mathbb{R}$, $\forall k \leq
  N$. Then, $\forall k \leq N$,
  \begin{align*} \lim_{a \rightarrow 0} \rho^{(k)} (a) = 0, \quad k \neq 0 \end{align*}
  \begin{align*} \lim_{a \rightarrow 0} \rho (a) = 1 \end{align*}
\end{lemma}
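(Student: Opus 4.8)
The plan is to treat $\rho = f \circ g$ as a composite, with outer function $f(x) = x^{b-1}$ and inner function $g(a) = 1 - q(a)$, and to read off the limits of its derivatives from Fa\`a di Bruno's formula \eqref{formfaa} together with Lemma \ref{lemmaB}. First I would dispose of the case $k = 0$: by Proposition \ref{prop1} we have $q(a) \to 0$ as $a \to 0$, so $g(a) = 1 - q(a) \to 1$, and since $f$ is continuous at $1$ this gives $\rho(a) = (1 - q(a))^{b-1} \to 1$. Note also that $g(a)$ stays in $(0,1)$ and bounded away from $0$ for $a$ near $0$, so $f$ is $C^\infty$ along the curve $g(a)$ and every derivative $f^{(r)}(g(a))$ makes sense.

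For $k \geq 1$ I would apply \eqref{formfaa} to $\rho = f \circ g$, writing
\begin{equation*}
  \rho^{(k)}(a) = \sum_{\{\tmmathbf{m} \in \mathbb{N}^k \mid \sum_{j=1}^k j m_j = k\}} \frac{k!}{m_1! \cdots m_k!}\, f^{(\sum_{j=1}^k m_j)}(g(a)) \prod_{j=1}^k \left( \frac{g^{(j)}(a)}{j!} \right)^{m_j}.
\end{equation*}
The two ingredients to control are the outer factor $f^{(r)}(g(a))$ and the product of inner derivatives. Since $f^{(r)}(x) = (b-1)(b-2)\cdots(b-r)\,x^{b-1-r}$ and $g(a) \to 1$, each factor $f^{(r)}(g(a))$ converges to the finite value $(b-1)(b-2)\cdots(b-r)$. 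For the inner derivatives, observe that $g^{(j)}(a) = -q^{(j)}(a)$ for every $j \geq 1$; because the hypothesis of Lemma \ref{lemmaC} is exactly the hypothesis of Lemma \ref{lemmaB}, I may invoke \eqref{inLemmaB2} with $m = 0$, which yields $\lim_{a \to 0} q^{(j)}(a) = 0$ for all $j \leq N$, and hence $g^{(j)}(a) \to 0$ for each $j$ with $1 \leq j \leq k \leq N$.

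The conclusion is then immediate: in every index set of the sum the constraint $\sum_{j=1}^k j m_j = k \geq 1$ forces at least one $m_j \geq 1$, so the corresponding product $\prod_{j=1}^k (g^{(j)}(a)/j!)^{m_j}$ contains at least one factor $g^{(j)}(a) \to 0$ while all remaining factors converge (to $0$ or, for $m_j = 0$, to $1$). Multiplying by the bounded outer factor $f^{(\sum m_j)}(g(a))$, each summand tends to $0$, and a finite sum of terms tending to $0$ tends to $0$, giving $\lim_{a \to 0} \rho^{(k)}(a) = 0$ for $k \neq 0$. There is no real obstacle here; the only point requiring care is the bookkeeping that ensures every factor appearing in a Fa\`a di Bruno term genuinely has a limit — the outer factors through continuity of $f$ at $1$, and the inner factors through Lemma \ref{lemmaB} — so that one is entitled to pass the limit through the finite product and sum.
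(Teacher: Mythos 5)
Your proposal is correct and follows essentially the same route as the paper: the paper also applies Fa\`a di Bruno's formula \eqref{formfaa} to $\rho(a) = (1-q(a))^{b-1}$, obtaining a sum of terms each containing a power $(1-q(a))^{b-1-\sum_j k_j}$ (which stays bounded since $q(a)\to 0$) times a product of derivatives $q^{(j)}(a)$, and then invokes Lemma \ref{lemmaB} to conclude that every such product, having at least one factor $q^{(j)}(a)\to 0$, vanishes in the limit. Your write-up merely makes explicit a few points the paper leaves implicit (the continuity of the outer function near $1$ and the fact that the hypotheses of Lemmas \ref{lemmaB} and \ref{lemmaC} coincide), so there is nothing to correct.
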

\begin{proof}
  As $q (a) \rightarrow 0$, then $\rho (a) \rightarrow 1$. The $n$th
  derivative of $\rho$ can be expressed using \eqref{formfaa} as
  \begin{align*}
    \rho^{(n)} (a) = \sum_{\{ \tmmathbf{k} \mid \sum_{j = 1}^n jk_j = n\}}
    c_{\tmmathbf{k}}  (1 - q (a))^{b - 1 - \sum_{j = 1}^n k_j}  \prod_{j =
    1}^n (q^{(j)} (a))^{k_j}
  \end{align*}  
  which, by Lemma \ref{lemmaB} tends to $0$ as $a \rightarrow 0$, as $q^{(j)}
  (a) \rightarrow 0$.
\end{proof}
\begin{lemma}
  \label{lemmaD}Let $N \in \mathbb{N}^{\ast}$ and assume that $\lim_{a
  \rightarrow 0} \varphi^{(k)} (a)$ exists in $\mathbb{R}$, $\forall k \leq
  N$. Then, $\forall k \leq N$,
  \begin{align*} \lim_{a \rightarrow 0} \sigma^{(k)} (a) = 0 \end{align*}
\end{lemma}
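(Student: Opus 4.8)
The function $\sigma$ is defined as an integral with a variable upper limit $q(a)$, an integrand $t^a(1-t)^{b-2}\log t$ that depends on the parameter $a$, and the factor $\log t$ which introduces a logarithmic singularity at the lower endpoint $t=0$. The plan is to differentiate $\sigma$ repeatedly and show that every term in the resulting expression vanishes as $a\to 0$, using the decay estimates for the derivatives of $q$ from Lemma \ref{lemmaB} together with the auxiliary vanishing result of Lemma \ref{lemmaA}. The key structural point is that differentiating under the integral sign brings down factors of $\log t$, while differentiating the upper limit produces boundary terms evaluated at $t=q(a)$, which carry factors of the form $q(a)\log^j q(a)$.

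First I would set up the derivative expansion carefully. Differentiating $\sigma(a)=\int_0^{q(a)} t^a(1-t)^{b-2}\log t\,\mathd t$ produces two kinds of contributions: a boundary term from the Leibniz rule, namely $q'(a)\,q(a)^a(1-q(a))^{b-2}\log q(a)$, and an interior term $\int_0^{q(a)} t^a(1-t)^{b-2}\log^2 t\,\mathd t$ coming from $\partial_a t^a=t^a\log t$. Iterating this, the $k$th derivative $\sigma^{(k)}(a)$ is a finite sum. The interior-type terms are integrals of the form $\int_0^{q(a)} t^a(1-t)^{b-2}\log^{m} t\,\mathd t$ with $m\geq 1$, and the boundary-type terms are products of derivatives $q^{(j)}(a)$ (arising from applying the Leibniz rule to the moving endpoint, possibly several times via Fa\`a di Bruno) multiplied by powers of $q(a)$, powers of $\log q(a)$, and smooth factors like $(1-q(a))^{b-2}$ and $q(a)^a$ that remain bounded near $a=0$.

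For the interior integrals I would bound them directly: since $q(a)\to 0$, the domain of integration shrinks, and $\int_0^{q(a)} t^a(1-t)^{b-2}\log^m t\,\mathd t\to 0$ as $a\to 0$ because the integrand is integrable near $0$ (the singularity $t^a\log^m t$ is integrable for $a>-1$) and the upper limit tends to $0$; one can make this quantitative by noting $(1-t)^{b-2}$ is bounded on $[0,q(a)]$ and estimating $\int_0^{q} t^a\log^m t\,\mathd t$, which is $O(q^{a+1}\log^m q)$ and hence vanishes. For the boundary terms I would invoke Lemma \ref{lemmaB}, which gives $\lim_{a\to 0} q^{(j)}(a)/a^m=0$ for all $m\geq 0$, so each $q^{(j)}(a)$ tends to $0$, while the accompanying factors $q(a)\log^{m} q(a)$ tend to $0$ by Lemma \ref{lemmaA}; the smooth factors stay bounded, so every boundary term vanishes.

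The main obstacle is bookkeeping rather than analysis: I must verify that after $k$-fold differentiation \emph{every} surviving term genuinely contains either an interior integral with at least one power of $\log t$ or a boundary factor of the controllable form $q^{(j)}(a)\cdot q(a)^{?}\log^{?} q(a)$, so that no uncontrolled term (for instance, one lacking the crucial $q(a)$ factor needed for Lemma \ref{lemmaA}) slips through. I would handle this by an induction on $k$: assuming $\sigma^{(k)}$ is a sum of such terms, differentiating once more either differentiates an interior integral (raising the power of $\log t$ by one, keeping it an interior term, or producing a boundary term with the integrand evaluated at $q(a)$, which supplies the needed $q(a)$ factor) or differentiates a boundary term (applying the product rule to a product of $q^{(j)}$'s and the smooth/logarithmic factors, each derivative of which either lowers a power of $q(a)$ by one while contributing $q'(a)/q(a)$, still leaving an overall $q(a)$ or $q^{(j)}$ factor after regrouping). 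Once the induction confirms the structural form is preserved, the limit computation is immediate from Lemmas \ref{lemmaA} and \ref{lemmaB}.
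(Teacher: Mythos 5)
Your overall decomposition is the same as the paper's (interior integrals acquiring higher powers of $\log t$, plus derivatives of the Leibniz boundary terms, organized by induction), and your treatment of the interior integrals is correct. The gap is in the boundary terms, and it is exactly at the point that carries the analytic content of the lemma. The Leibniz boundary term is $q'(a)\, q(a)^a\, (1-q(a))^{b-2}\log^k q(a)$: the factor produced by evaluating the integrand at $t=q(a)$ is $q(a)^a$, \emph{not} $q(a)$. Since $q(a)^a = e^{a\log q(a)} = e^{-\varphi(a)} \to p \neq 0$, no factor of the form $q(a)\log^j q(a)$ ever appears, so Lemma \ref{lemmaA} is not applicable to the boundary terms in the way you claim. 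What these terms actually contain are factors that \emph{blow up}: $\log^k q(a) \sim (\log p)^k a^{-k}$, and, once differentiation starts, factors $q^{(s)}(a)/q(a)$, which by \eqref{inlemmaB1} are of size $a^{-2s}$. So the boundary terms are not products of vanishing and bounded quantities, and the argument ``each $q^{(j)}\to 0$, $q\log^m q\to 0$, smooth factors bounded'' does not apply to them.

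Two ingredients are needed to close this, and both are in the paper's proof. First, $q(a)^a$ must be recognized as $e^{-\varphi(a)}$, so that \emph{all} of its derivatives converge, by Fa\`a di Bruno and the hypothesis that $\lim_{a\to 0}\varphi^{(k)}(a)$ exists; this is the only place the lemma's hypothesis enters beyond feeding Lemma \ref{lemmaB}. Treating $q(a)^a$ as a generic ``bounded smooth factor'' fails because boundedness is not stable under differentiation: naively, $(q(a)^a)' = q(a)^a\bigl(\log q(a) + a q'(a)/q(a)\bigr)$, whose individual pieces blow up. Second, the vanishing of each term must come from a domination argument, not from all factors being small: every $1/q(a)$ produced by differentiating $\log^k q(a)$ arrives paired with a new derivative $q^{(s)}(a)$, and each such pair is $O(a^{-2s})$ by \eqref{inlemmaB1}; there always remains at least one unpaired factor $q^{(j)}(a)$ (descended from the original $q'(a)$), and its super-polynomial decay \eqref{inLemmaB2} beats the merely polynomial-in-$1/a$ blow-up of all the $\log^k q$ and paired factors. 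This is exactly how the paper concludes, writing $[q'(a)\log^k q(a)]^{(m)}$ as $q^{(n_1+1)}(a)/a^{M}$ times factors $a^{2s}q^{(s)}(a)/q(a)$ that converge. You quote the strong form $q^{(j)}(a)/a^m\to 0$ of Lemma \ref{lemmaB} but then use only its weak consequence $q^{(j)}(a)\to 0$; without the pairing-plus-domination step, your induction invariant (``an overall $q(a)$ or $q^{(j)}$ factor remains after regrouping'') is too weak to let you take the limit.
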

\begin{proof}
  We have
  \begin{align*} \int_0^{q (a)} t^a  (1 - t)^{b - 2} \log^m t \mathd t \rightarrow 0 \end{align*}
  as $q (a) \rightarrow 0$ and $(1 - t)^{b - 2} \log^m t$ is integrable near
  $0$. Hence, $\sigma (a) \rightarrow 0$. For $n > 0$ we have
  \begin{align}
    \sigma^{(n)} (a) = \int_0^{q (a)} t^a  (1 - t)^{b - 2} \log^{n + 1} t
    \mathd t + \sum_{k = 1}^n [e^{- \varphi (a)} (1 - q (a))^{b - 2} q' (a)
    \log^k q (a)]^{(n - k)}
  \end{align}  
  So, it suffices to prove that
  \begin{align*} [e^{- \varphi (a)} (1 - q (a))^{b - 2} q' (a) \log^k q (a)]^{(l)}
     \rightarrow 0, \quad \forall k, l \leq N \end{align*}
  By \eqref{formprod} we can write
  \begin{align*}
	&[e^{- \varphi (a)}  (1 - q (a))^{b - 2} q' (a) \log^k q (a)]^{(l)} =\\
     &\sum_{\{ \tmmathbf{m} \mid \sum_{j = 1}^3 m_j = l\}} c_{\tmmathbf{m}}
    [e^{- \varphi (a)}]^{(m_1)} [(1 - q (a))^{b - 2}]^{(m_2)}  [q' (a) \log^k
    q (a)]^{(m_3)}
  \end{align*}  
  By our assumptions, $\lim_{a \rightarrow 0} [e^{- \varphi (a)}]^{(m_1)} \in
  \mathbb{R}$, and as in Lemma \ref{lemmaC}, $((1 - q (a))^{b - 2})^{(m_2)}$
  also converges. Finally, by \eqref{formprod}, \eqref{formfaa} and Lemma
  \ref{lemmaB}
  \begin{align*}
     [q' (a) \log^k q (a)]^{(m)} = \sum_{\{ \tmmathbf{n} \mid \sum_{j = 1}^{k
     + 1} n_j = m\}} c_{\tmmathbf{n}} q^{(n_1 + 1)} (a)  \prod_{j = 2}^{k + 1}
     [\log q (a)]^{(n_j)} = \\
     \sum_{\{ \tmmathbf{n} \mid \sum_{j = 1}^{k + 1} n_j = m\}}
     c_{\tmmathbf{n}} q^{(n_1 + 1)} (a)  \prod_{j = 2}^{k + 1} \sum_{\{r \mid
     \sum_{s = 1}^{n_j} sr_s = n_j \}} d_{\tmmathbf{r}}  \prod_{s = 1}^{n_j}
     \left( \frac{q^{(s)} (a)}{q (a)} \right)^{r_s} = \\
     \sum_{\{ \tmmathbf{n} \mid \sum_{j = 1}^{k + 1} n_j = m\}}
     c_{\tmmathbf{n}}  \frac{q^{(n_1 + 1)} (a)}{a^{2^k  \prod_{j = 2}^{k + 1}
     n_j}}  \prod_{j = 2}^{k + 1} \sum_{\{r \mid \sum_{s = 1}^{n_j} sr_s = n_j
     \}} d_{\tmmathbf{r}}  \prod_{s = 1}^{n_j} \left( \frac{q^{(s)} (a) a^{2
     s}}{q (a)} \right)^{r_s} \rightarrow 0 \end{align*}
  which completes the proof of the Lemma.
\end{proof}

\begin{proof*}{Proof of theorem \ref{theorem0}}
  By Proposition \ref{prop2} we have that $\varphi (0) = - \log p$. For the
  first derivative, as $\rho (0) = 1$ and $\sigma (0) = 0$, and $\varphi (0) =
  \psi (0) = - \log p$, we get from \eqref{eq1} that the limit $\lim_{a
  \rightarrow 0} \varphi' (a) = \varphi' (0)$ exists and $\varphi' (0) = \psi'
  (0)$. We proceed inductively. Let $n \in \mathbb{N}^{\ast}$ and assume that
  $\lim_{a \rightarrow 0} \varphi^{(k)} (a)$ exists and $\varphi^{(k)} (0) =
  \psi^{(k)} (0)$ $\forall k \leq n$. Differentiating \eqref{eq1} $n$ times we
  get  
  \begin{align*}
    (e^{- \psi (a)})^{(n + 1)} - (e^{- \varphi (a)})^{(n + 1)} \rho (a) -
    \sum_{k = 0}^{n - 1} (e^{- \varphi (a)})^{(k + 1)} \rho (a)^{(n - k)} = (b
    - 1) \sigma^{(n)} (a)
  \end{align*}  
  and by Lemmas \ref{lemmaC} and \ref{lemmaD} we get  
  \begin{align*}
    \lim_{a \rightarrow 0} (e^{- \psi (a)})^{(n + 1)} = \lim_{a \rightarrow 0}
    (e^{- \varphi (a)})^{(n + 1)}
  \end{align*}  
  which, by formula \eqref{formfaa} and the induction hypothesis, gives that
  the limit $\lim_{a \rightarrow 0} \varphi^{(n + 1)} (a) = : \varphi^{(n +
  1)} (0)$ exists in $\mathbb{R}$ and
  \begin{align*} \sum_{\{ \tmmathbf{k} \mid \sum_{j = 1}^{n + 1} jm_j = n + 1\}}
     c_{_{\tmmathbf{k}}} e^{- \psi (0)} \prod_{j = 1}^{n + 1}& \left(
     \frac{\psi^{(j)} (0)}{j!} \right)^{m_j} \\&=\sum_{\{ \tmmathbf{k} \mid
     \sum_{j = 1}^{n + 1} jm_j = n + 1\}} c_{\tmmathbf{k}} e^{- \varphi (0)}
     \prod_{j = 1}^{n + 1} \left( \frac{\varphi^{(j)} (0)}{j!} \right)^{m_j}
  \end{align*}  
  and as by the induction hypothesis $\varphi^{(j)} (0) = \psi^{(j)} (0)$ for
  $j \leq n$, it gives
  \begin{align*} \varphi^{(n + 1)} (0) = \psi^{(n + 1)} (0) \end{align*}
  which completes the induction. To prove \eqref{bintegerformula}, the fact
  that
  \begin{align*} \log \Gamma (x + 1) - \log \Gamma (x) = \log x \end{align*}
  gives the functional relation for the polygamma function  
  \begin{align}
    \Psi (k, x + 1) - \Psi (k, x) = \frac{(- 1)^k k!}{x^{k + 1}}
  \end{align}  
  hence
  \begin{align*} \varphi^{(k + 1)} (0) = \Psi (k, b) - \Psi (k, 1) = \sum_{n = 1}^{b - 1}
     (\Psi (k, n + 1) - \Psi (k, n)) = \sum_{n = 1}^{b - 1} \frac{(- 1)^k
     k!}{n^{k + 1}} \end{align*}
\end{proof*}

\begin{proof*}{Proof of Corollary \ref{cor0}}
  The fact that $\varphi$ and $\psi$ have the same asymptotic expansion at $0$
  implies that an approximation of $\varphi$ is
  \begin{align*} \varphi (a) \sim \log \frac{\Gamma (a + b)}{\Gamma (a + 1) \Gamma (b)} -
     \log p \quad \tmop{as} a \rightarrow 0 \end{align*}
  and the error decreases faster than any positive power of $a$. This also
  implies that
  \begin{align*} q (a) \sim \left( \frac{\Gamma (a + 1) \Gamma (b)}{\Gamma (a + b)}
     \right)^{1 / a} p^{1 / a} \quad \tmop{as} a \rightarrow 0 \end{align*}
  in the sense that $\forall n \in \mathbbm{N}, \varepsilon > 0, \exists a_{n,
  \varepsilon} > 0$ such that $\forall a < a_{n, \varepsilon}$
  \begin{align*} e^{- \varepsilon a^n} \left( \frac{\Gamma (a + 1) \Gamma (b)}{\Gamma (a +
     b)} \right)^{1 / a} p^{1 / a} < q (a) < e^{\varepsilon a^n} \left(
     \frac{\Gamma (a + 1) \Gamma (b)}{\Gamma (a + b)} \right)^{1 / a} p^{1 /
     a} \end{align*}
  hence
  \begin{equation}
    \lim_{a \rightarrow 0} \frac{q (a)}{p^{1 / a}} = e^{- \gamma - \Psi (0,
    b)}
  \end{equation}
  $\gamma$ being the Euler's constant. The RHS of the above inequality may be
  rewritten as
  \begin{align*} \frac{q (a)}{p^{1 / a}} < \left( \frac{\Gamma (a + 1) \Gamma (b)}{\Gamma
     (a + b)} \right)^{1 / a} + \varepsilon' a^n \left( \frac{\Gamma (a + 1)
     \Gamma (b)}{\Gamma (a + b)} \right)^{1 / a} \end{align*}
  close to $0$ and for an $\varepsilon' > \varepsilon$, and the LHS
  \begin{align*} \left( \frac{\Gamma (a + 1) \Gamma (b)}{\Gamma (a + b)} \right)^{1 / a} -
     \varepsilon a^n \left( \frac{\Gamma (a + 1) \Gamma (b)}{\Gamma (a +
     b)} \right)^{1 / a} < \frac{q (a)}{p^{1 / a}} \end{align*}
  Hence
  \begin{align*} \frac{q (a)}{p^{1 / a}} \sim \left( \frac{\Gamma (a + 1) \Gamma
     (b)}{\Gamma (a + b)} \right)^{1 / a} \end{align*}
  with a remainder term vanishing faster than any power of $a$ at $0$. The
  rest comes from considering
  \begin{align*} \left( \frac{\Gamma (a + 1) \Gamma (b)}{\Gamma (a + b)} \right)^{1 / a} =
     \exp \left( \frac{1}{a} \log \frac{\Gamma (a + 1) \Gamma (b)}{\Gamma (a +
     b)} \right) \end{align*}
  along with Fa{\`a} di Bruno formula.
\end{proof*}

\section{Asymptotics at $\infty$}

\subsection{The operator $D$}\label{sectionD}

To find the asymptotic expansion at infinity, the previous technique has to be
adjusted accordingly. First, we introduce the differential operator $D$
defined by
\begin{equation}
  D f (a) = a^2 \partial f (a) 
\end{equation}
It satisfies the product rule
\begin{equation}
  D (f g) (a) = g (a) D f (a) + f (a) D g (a)
\end{equation}
and the composition rule
\begin{align*} D (f \circ g) (a) = f' (g (a)) D g (a) \end{align*}
The last two relations combined give us the Faa di Bruno formula for $D$
\begin{equation}
  D^n (f \circ g) (a) = \sum_{\{ \tmmathbf{m} \in \mathbbm{N}^n \mid
  \sum_{j = 1}^n j m_j = n \}} \frac{n!}{m_1 !m_2 ! \ldots m_n !} f^{(|
  \tmmathbf{m} |)} (g (a)) \prod_{j = 1}^n \left( \frac{D^j g (a)}{j!}
  \right)^{m_j} \label{faaD}
\end{equation}
where $| \tmmathbf{m} | = \sum_{j = 1}^n m_j$. Also, we have the two-arguments composition rule
\begin{equation}
  D f (a, \varphi (a)) = D_1 f (a, \varphi (a)) + D \varphi (a) \partial_2 f
  (a, \varphi (a))
\end{equation}
where $D_1 f (a, b) = a^2 (\partial_1 f) (a, b)$, $\partial_1$ denoting
differentiation wrt the first variable of a multivariate function, i.e. in our
case $D_1 f (a, \varphi (a)) = a^2 (\partial_1 f) (a, \varphi (a))$. Furthermore, we
remark that it acts on monomials, for $m \in \mathbbm{Z}$, by
\begin{align*} D a^m = n a^{m + 1} \end{align*}
and by induction
\begin{align*} D^n a^m = (m)_n a^{m + n} \end{align*}
The operator $D$ can be used to deal with asymptotic expansions at infinity.
To see this, intuitively, starting from the formal power series
\begin{align*} f (x) = c_0 + \frac{c_1}{x} + \frac{c_2}{x^2} + \frac{c_3}{x^3} + \ldots \end{align*}
one can get
\begin{align*} D^n f (x) = \sum^{\infty}_{k = n} (- 1)^n \frac{k!}{(n - k) !}
   \frac{c_k}{x^{k - n}} \end{align*}
If certain conditions apply and it is possible to take limits to $\infty$, all
but the first term of the sum vanish and we get
\begin{align*} \lim_{x \rightarrow \infty} D^n f (x) = (- 1)^n n!c_n \end{align*}
This is rigorously treated in Lemma \ref{lemD}, which is proved below:

\begin{proof*}{Proof of Lemma \ref{lemD}}
  To show i), we notice that if for a function $f$ we have $\lim_{x
  \rightarrow \infty} f (x) = a_0 \in \mathbbm{R}$ and $D f (x) = a_1 + a_2 /
  x + a_3 / x^2 + \ldots + a_{k + 1} / x^k +\mathcal{O} (1 / x^{k + 1})$, then
  by integrating we get that $f (x) = a_0 - a_1 / x - a_2 / 2 x^2 - a_3 / 3
  x^3 + \ldots + a_{k + 1} / k x^{k + 1} +\mathcal{O} (1 / x^{k + 2})$. Next,
  we see that, under the assumptions of the first part of the lemma, we have
  that $\lim_{x \rightarrow \infty} D^{n - 1} f (x) = a \in \mathbbm{R}$ and
  $\lim_{x \rightarrow \infty} D^n f (x) = b \in \mathbbm{R}$. This implies
  that $D^{n - 1} f (x) = a +\mathcal{O} (1 / x)$. Applying this observation
  inductively to find the asymptotic expansions of lower powers of $D$ proves
  the first part of the Lemma. For the second part, we notice that as the
  derivatives admit asymptotic expansions, these can be obtained by
  differentiating the asymptotic expansion of the original function. In the
  same way, we may apply the operator $D$ to the original asymptotic
  expansion, as $D^k$ can be expressed as a combination of operators
  $\partial^l$ for $l \leqslant k$, and take limits to $\infty$ to prove the
  second part.\end{proof*}\\In the following subsections we shall compute the asymptotic expansion of
$\varphi$ using the operator $D$. We start with the equation
\begin{equation}
  \int_0^{\varphi (a)} \tau (a ; s) \mathd s = (1-p) \frac{\Gamma (b) \Gamma (a)
  a^b}{\Gamma (a + b)} \label{eq0}
\end{equation}
Where
\begin{equation}
  \tau (a ; s) = e^{- s} (a - a e^{- s / a})^{b - 1} \label{tau}
\end{equation}
Our method consists of acting and iterating the operator $D$ on \eqref{eq0}
and taking the limits to $\infty$ on both sides. So we have to see how $D$
acts on $\tau$ and on the right hand side.

\subsection{Asymptotics of the RHS}

To study the right hand side of the equation \eqref{eq0}, we study the
asymptotics of the ratio
\begin{equation}
  \frac{\Gamma (a) a^b}{\Gamma (a + b)} \label{ratio}
\end{equation}
In {\cite{tricomi}}, Tricomi and Erdelyi derived an asymptotic expansion for
such ratios of Gamma functions, in terms of a generalisation of N{\o}rlund
Polynomials, which in our special case it may be expressed as
\begin{align*} \frac{\Gamma (a) a^b}{\Gamma (a + b)} \sim \sum_{n \geqslant 0}
   \frac{\Gamma (1 - b)}{\Gamma (1 - (b + n))} \frac{B_n^{(1 - b)}}{n!a^n},
   \quad x \rightarrow \infty \end{align*}
which by the reflection formula for the Gamma function can be rewritten as
\begin{equation}
  \frac{\Gamma (a) a^b}{\Gamma (a + b)} \sim \sum_{n = 0}^{\infty} \frac{(-
  1)^n}{n!} (b)_n \frac{B_n^{(1 - b)}}{a^n} \label{asratio}
\end{equation}
We shall prove the following Lemma:
\begin{lemma}
  For $n \in \mathbbm{N}$, we have\label{lemmarhs}
  \begin{equation}
    \lim_{a \rightarrow \infty} D^n \left( (1-p) \frac{\Gamma (b) \Gamma (a)
    a^b}{\Gamma (a + b)} \right) = (1-p) \Gamma (b + n) B_n^{(1 - b)}
    \label{limRHS}
  \end{equation}
\end{lemma}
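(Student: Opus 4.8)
The plan is to apply part ii of Lemma~\ref{lemD} to the function $f(a) \assign \Gamma(a) a^b / \Gamma(a+b)$, for which the Tricomi--Erdelyi expansion \eqref{asratio} already supplies the asymptotic series at infinity. Writing \eqref{asratio} as $f(a) \sim \sum_{k=0}^{\infty} c_k a^{-k}$, we read off the coefficients $c_k = \frac{(-1)^k}{k!}(b)_k B_k^{(1-b)}$. Since the left-hand side of \eqref{limRHS} equals $(1-p)\Gamma(b)\lim_{a\rightarrow\infty} D^n f(a)$, the claim reduces to showing $\lim_{a\rightarrow\infty} D^n f(a) = (-1)^n n!\, c_n = (b)_n B_n^{(1-b)}$, which is exactly the conclusion of Lemma~\ref{lemD}~ii once its hypotheses are checked; the desired factor $\Gamma(b+n)$ then emerges through $(b)_n\Gamma(b) = \Gamma(b+n)$.

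The hypotheses of Lemma~\ref{lemD}~ii demand, beyond the expansion of $f$ itself, that each derivative $f^{(m)}$ admit an asymptotic expansion of order $m+n$. To verify this I would argue through the logarithmic derivative. From $\log f(a) = \log\Gamma(a) + b\log a - \log\Gamma(a+b)$ one gets
\begin{equation*}
  (\log f)'(a) = \Psi(0,a) + \frac{b}{a} - \Psi(0,a+b),
\end{equation*}
and the classical Stirling-type asymptotic expansions of the digamma function and, after repeated differentiation, of the polygamma functions $\Psi(j,\cdot)$ show that $(\log f)'$ and all its derivatives possess asymptotic expansions in negative powers of $a$ of every order. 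Since $f = e^{\log f}$, formula \eqref{formfaaexp} (equivalently the Bell-polynomial form in Remark~\ref{remarkbell}) expresses $f^{(m)} = f\cdot\mathcal{B}_m\big((\log f)', (\log f)'', \ldots, (\log f)^{(m)}\big)$; because products and finite sums of asymptotic series are again asymptotic series, each $f^{(m)}$ inherits an asymptotic expansion of arbitrary order, which secures the hypotheses for all $n$.

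With the hypotheses in place, Lemma~\ref{lemD}~ii gives $\lim_{a\rightarrow\infty} D^n f(a) = (-1)^n n!\, c_n = (b)_n B_n^{(1-b)}$, and multiplying by $(1-p)\Gamma(b)$ and using $(b)_n\Gamma(b) = \Gamma(b+n)$ yields \eqref{limRHS} at once.

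The main obstacle is the middle step: justifying that the formal asymptotic expansion of $f$ may be treated as differentiable, i.e.\ that the derivatives $f^{(m)}$ genuinely admit asymptotic expansions, since Lemma~\ref{lemD}~ii does not grant this for free. The reduction to the polygamma functions via the logarithmic derivative is what makes this tractable, as their asymptotic expansions and those of their derivatives are classical and term-by-term differentiable. An alternative that avoids Lemma~\ref{lemD}~ii would be to establish a direct recursion for $D^n f$ from $D f = a^2 f\,(\log f)'$ together with the product rule for $D$, computing the limits inductively; this appears to require the same asymptotic input on the polygamma functions while being more laborious, so the route through Lemma~\ref{lemD}~ii is preferable.
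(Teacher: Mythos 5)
Your proof is correct, and it is worth comparing the mechanism with the paper's, because although both arguments share the same skeleton --- the Tricomi--Erdelyi expansion \eqref{asratio}, the passage to the logarithm of the ratio, the differentiable expansion \eqref{asloggamma} of $\log \Gamma$, and Lemma \ref{lemD} --- they deploy these pieces differently. You verify the hypotheses of Lemma \ref{lemD} ii directly for $f(a)=\Gamma(a)a^b/\Gamma(a+b)$: each derivative $f^{(m)}=f\cdot\mathcal{B}_m\bigl((\log f)',\ldots,(\log f)^{(m)}\bigr)$ by \eqref{formfaaexp}, the polygamma expansions supply term-by-term differentiable series for $(\log f)^{(j)}$, and products and sums of asymptotic series are again asymptotic series; the limits $\lim_{a\to\infty}D^nf(a)=(b)_nB_n^{(1-b)}$ are then read off the coefficients of \eqref{asratio}, and $(b)_n\Gamma(b)=\Gamma(b+n)$ finishes. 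The paper instead sets $V(a)=\log\bigl(\Gamma(a)a^b/\Gamma(a+b)\bigr)$, computes $\lim_{a\to\infty}D^nV(a)$ from the expansion of $V$ (this step is the same use of Lemma \ref{lemD} ii that you make, just applied to $V$ rather than $f$), and then evaluates $\lim_{a\to\infty}D^ne^{V(a)}$ via the Fa{\`a} di Bruno formula \eqref{faaD} for the operator $D$, arriving at an independent closed-form Bell-polynomial expression for the limits before reconciling it with \eqref{asratio}. Your route is shorter and avoids the $D$-calculus entirely; the paper's longer detour is deliberate, since having two expressions for the same coefficients is exactly what yields the identity $(b)_nB_n^{(1-b)}=\mathcal{B}_n\bigl(B_2(b)-B_2,\ldots,B_{n+1}(b)-B_{n+1}\bigr)$ of Remark \ref{remarkratio}, a by-product your streamlined argument, by design, does not produce.
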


\begin{proof}
  The coefficients of the asymptotic expansion \eqref{asratio}, by Lemma
  \ref{lemD}, can be used to give the limit in \eqref{limRHS}, if the
  derivatives of the ratio also admit asymptotic expansions. Hence we shall
  find these asymptotic expansions of the derivatives, and also a different
  expression for the coefficients in the asymptotic expansion of the ratio
  \eqref{ratio} on the way.
  
  The tool we shall work with is the operator $D$ and its Fa{\`a} di
  Bruno formula eq\ref{faaD}. We denote the logarithmic derivative of the
  ratio \eqref{ratio} by
  \begin{equation}
    V (a) \assign \log \frac{\Gamma (a) a^b}{\Gamma (a + b)} = b \log a + \log
    \Gamma (a) - \log \Gamma (a + b)
  \end{equation}
  A classic result on the asymptotic expansion of $\log \Gamma$ is the
  following, see {\cite[5.11.8]{DLMF}}, for fixed
  $h \in \mathbbm{C}$,
  \begin{equation}
    \log \Gamma (x + h) \sim \log \sqrt{2 \pi} + \left( x + h - \frac{1}{2}
    \right) \log x - x + \sum_{n \geq 2} \frac{B_n (h)}{n (n - 1)} x^{1 - n}
    \label{asloggamma}, \quad x \rightarrow + \infty
  \end{equation}
  which has the nice property that it can also be differentiated, and give us
  asymptotic expansions of polygamma functions. This implies also that the
  derivatives of $V$ admit asymptotic expansions. We have, asymptotically,
  \begin{align*} \begin{array}{l}
       V (a) \sim \sum_{n \geq 2} \frac{B_n - B_n (b)}{n (n - 1)}
       \frac{1}{a^{n - 1}} = \sum_{n \geq 1} \frac{B_{n + 1} - B_{n + 1}
       (b)}{n (n + 1)} \frac{1}{a^n}
     \end{array} \end{align*}
  We have, then, by Lemma \ref{lemD}, that
  \begin{equation}
    \lim_{a \rightarrow \infty} D^n V (a) = (- 1)^n n! \frac{B_{n + 1} - B_{n
    + 1} (b)}{n (n + 1)}
  \end{equation}
  Acting $D$ $n$ times on ratio \eqref{ratio} we get
  \begin{align}
    &D^n \left(  \frac{\Gamma (a) a^b}{\Gamma (a + b)} \right) = D^n e^{V (a)}\nonumber
    \\
    &\qquad= e^{V (a)} \sum_{\{ \tmmathbf{m} \in \mathbbm{N}^n \mid \sum_{j =
    1}^n j m_j = n \}} \frac{n!}{m_1 !m_2 ! \ldots m_n !} \prod_{j =
    1}^n \left( \frac{D^j V (a)}{j!} \right)^{m_j} \label{rhs0}
  \end{align}
  and taking limits we end up with
  \begin{align*} \lim_{a \rightarrow \infty} D^n \left(  \frac{\Gamma (a) a^b}{\Gamma (a +
     b)} \right) = \sum_{\{ \tmmathbf{m} \in \mathbbm{N}^n \mid \sum_{j =
     1}^n j m_j = n \}} \frac{(- 1)^n n!}{m_1 !m_2 ! \ldots m_n !}
     \prod_{j = 1}^n \left( \frac{B_{j + 1} - B_{j + 1} (b)}{j (j + 1)}
     \right)^{m_j} \end{align*}
  Hence, by Lemma \ref{lemD}, the derivatives of the ratio \eqref{ratio} admit
  asymptotic expansions at infinity, and these can be given by differentiating
  the asymptotic expansion \eqref{asratio}. 
\end{proof}

\begin{remark}
  \label{remarkratio}In the proceeding proof, we find two different ways to
  express the asymptotic expansion of the ratio of gamma functions, which
  implies a relation between N{\o}rlund, Bernoulli and Bell polynomials we
  could not trace in the literature,
  \begin{equation}
    (b)_n B_n^{(1 - b)} = \mathcal{B}_n (B_2 (b) - B_2, B_3 (b) - B_3, \ldots,
    B_{n + 1} (b) - B_{n + 1} )
  \end{equation}
  and using the fact that
  \begin{align*} \sum^{j + 1}_{k = 1} B_{j - k + 1} \frac{(j - 1) !}{k! (j + 1 - k) !} b^k
     = B_{j + 1} (b) - B_{j + 1} \end{align*}
  we get

  \begin{align*} (b)_n B_n^{(1 - b)} = \sum_{\{ \tmmathbf{m} \in \mathbbm{N}^n \mid
     \sum_{j = 1}^n j m_j = n \}} \frac{n!}{m_1 !m_2 ! \ldots m_n !}
     \prod_{j = 1}^n \left( \sum^{j + 1}_{k = 1} B_{j - k + 1} \frac{(j - 1)
     !}{k! (j - k + 1) !} b^k \right)^{m_j} \end{align*}
\end{remark}

\subsection{Asymptotics of the LHS}

We shall first study the asymptotic behaviour of $\tau$, defined in
\eqref{tau}, through the following Lemma.
\begin{lemma}
  We have the limits
  \begin{equation}
    \lim_{a \rightarrow \infty} D^n \tau (a ; s) = B_n^{(1 - b)} e^{- s} s^{b
    - 1 + n}
  \end{equation}
\end{lemma}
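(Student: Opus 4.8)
The plan is to obtain the full asymptotic expansion of $\tau(a;s)$ in powers of $1/a$, with $s$ held fixed as a parameter, read off its coefficients, and then invoke Lemma \ref{lemD} to convert those coefficients into the limits $\lim_{a\to\infty} D^n\tau(a;s)$. Since the operator $D$ differentiates only in the variable $a$, the factor $e^{-s}$ in \eqref{tau} is a constant as far as $D$ is concerned, so it suffices to expand $g(a)^{b-1}$, where $g(a) \assign a(1-e^{-s/a})$.

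First I would substitute $u = s/a$ (so $u \to 0^{+}$ as $a\to\infty$) to get $g(a) = s\,\frac{1-e^{-u}}{u}$, hence
\[
  g(a)^{b-1} = s^{b-1}\left(\frac{1-e^{-u}}{u}\right)^{b-1}
  = s^{b-1}\left(\frac{u}{1-e^{-u}}\right)^{1-b}.
\]
The key observation is that $\dfrac{u}{1-e^{-u}}$ equals the Bernoulli-type generating kernel $\dfrac{x}{e^{x}-1}$ evaluated at $x=-u$. Raising to the power $1-b$ and using the defining relation \eqref{norlund} of the N{\o}rlund polynomials then gives
\[
  \left(\frac{1-e^{-u}}{u}\right)^{b-1}
  = \left(\frac{x}{e^{x}-1}\right)^{1-b}\Big|_{x=-u}
  = \sum_{n=0}^{\infty} B_n^{(1-b)}\frac{(-1)^n}{n!}\,u^n .
\]
Substituting $u=s/a$ back and restoring the factor $e^{-s}$ yields the expansion
\[
  \tau(a;s) = e^{-s}\sum_{n=0}^{\infty} B_n^{(1-b)}\frac{(-1)^n}{n!}\,\frac{s^{\,b-1+n}}{a^n},
\]
so the coefficient of $a^{-n}$ is $c_n = e^{-s}B_n^{(1-b)}\frac{(-1)^n}{n!}s^{\,b-1+n}$. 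By Lemma \ref{lemD} we then have $\lim_{a\to\infty}D^n\tau(a;s) = (-1)^n n!\,c_n = B_n^{(1-b)}e^{-s}s^{\,b-1+n}$, which is exactly the asserted limit.

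The only point needing care is the legitimacy of invoking Lemma \ref{lemD}, whose hypotheses require that the $a$-derivatives of $\tau$ likewise admit asymptotic expansions. This is guaranteed by the analyticity of $u\mapsto\left(\frac{1-e^{-u}}{u}\right)^{b-1}$ in a neighbourhood of $u=0$: the map $\frac{1-e^{-u}}{u}$ is entire with value $1$ at $u=0$, hence nonvanishing and raisable to the real power $b-1$ analytically, so the series above is genuinely convergent in $1/a$ for $a$ large and may be differentiated term by term. Alternatively, one may sidestep Lemma \ref{lemD} entirely and apply $D^n$ directly to this convergent series using the monomial rule $D^n a^{-m}=(-m)_n\,a^{\,n-m}$ from Subsection \ref{sectionD}: since $(-m)_n=0$ for $n>m$ and $a^{\,n-m}\to0$ for $m>n$, only the $m=n$ term survives in the limit, and with $(-n)_n=(-1)^n n!$ it reproduces the stated value at once. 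I expect the main --- indeed the only nontrivial --- step to be recognizing the reflection $x=-u$ that identifies $g(a)^{b-1}$ with the N{\o}rlund generating function; the remainder is routine bookkeeping.
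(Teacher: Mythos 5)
Your proposal is correct and takes essentially the same approach as the paper: both rewrite $\tau(a;s)$ via the reflection $x=-s/a$ as $e^{-s}s^{b-1}\left(\frac{x}{e^x-1}\right)^{1-b}$, expand by \eqref{norlund} into a convergent series in $1/a$, and read off the limit of $D^n\tau(a;s)$. Indeed, the ``alternative'' you mention at the end --- applying $D^n$ term by term with the monomial rule and noting that only the $k=n$ term survives in the limit --- is exactly the paper's own argument, so the two proofs coincide in substance.
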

\begin{proof}
  We have that
  \begin{align*} \tau (a ; s) = e^{- s} (a - a e^{- s / a} )^{b - 1} = e^{- s} \left(
     \frac{\frac{1}{a}}{1 - e^{- s / a}} \right)^{1 - b} = e^{- s} s^{b - 1}
     \left( \frac{- \frac{s}{a}}{e^{- s / a} - 1} \right)^{1 - b} \end{align*}
  We may write, in terms of N{\o}rlund polynomials, by \eqref{norlund},
  \begin{equation}
    \tau (a ; s) = e^{- s} s^{b - 1} \sum_{k = 0}^{\infty} B_k^{(1 - b)}
    \frac{(- 1)^k s^k}{k!a^k}
  \end{equation}
  and
  \begin{align*} D^n \tau (a ; s) = e^{- s} s^{b - 1} \sum_{k = n}^{\infty} B_k^{(1 - b)}
     \frac{(- 1)^{k + n} s^k}{(k - n) !a^{k - n}} \end{align*}
  and thus we get
  \begin{align*} \lim_{a \rightarrow \infty} D^n \tau (a ; s) = B_n^{(1 - b)} e^{- s} s^{b
     - 1 + n} 
    \end{align*}
\end{proof}\\Acting $D$ on the left hand side of \eqref{eq0} gives the expression
\begin{align*} D \int_0^{\varphi (a)} \tau (a ; s) \mathd s = D \varphi (a) \tau (a ;
   s) + \int_0^{\varphi (a)} D \tau (a ; s) \mathd s \end{align*}
and hence by induction, iterating $D$ totally $n$ times,
\begin{equation}
  D^n \int_0^{\varphi (a)} \tau (a ; s) \mathd s = \sum_{k = 0}^{n - 1} D^k
  (\nobracket D \varphi (a) D^{n - k - 1}_1 \tau (a ; \varphi (a)
  \nobracket)) + \int_0^{\varphi (a)} D^n \tau (a ; s) \mathd s
\end{equation}
We shall study the terms
\begin{align*} D^k (\nobracket D \varphi (a) D^{n - k - 1}_1 \tau (a ; \varphi (a)
   \nobracket)) = \sum_{j = 0}^k \binom{k}{j} D^{k - j + 1} \varphi (a) D^j
   [D_1^{n - k - 1} \tau (a ; \varphi (a))] \end{align*}
and as
\begin{align*} D^j [D_1^{n - k - 1} \tau (a ; \varphi (a))] = D^{j - 1} [D_1^{n - k} \tau
   (a ; \varphi (a)) + D \varphi (a) D_1^{n - k - 1} \partial_2 \tau (a ;
   \varphi (a))] \end{align*}
it is important to study the terms defined as
\begin{equation}
  d (k, m, n) \assign \lim_{a \rightarrow \infty} D^m [D_1^n \partial_2^k \tau
  (a ; \varphi (a))]
\end{equation}
In other words, we will compute, recursively, the limits of these terms for $a
\rightarrow \infty$. We note that, as $D^n \tau (a ; s)$ is an analytic function
of $s$ in some disc around $0$, as seen by its power series, we can
interchange differentiation wrt the second variable and the limit for $a
\rightarrow \infty$, as we know that $\varphi (a)$ converges to a finite
limit, provided that the convergence for $a \rightarrow \infty$ is locally
uniform, which indeed is (an argument: as $a \rightarrow \infty$, the radius
of convergence of the power series increase, so taking a compact set and
assuming $a$ large enough, we can use the convergence of the sequence of power
series to prove this result). We have
\begin{align*} D^m [D_1^n \partial_2^k \tau (a ; \varphi (a))] = D^{m - 1} [D_1^{n + 1}
   \partial_2^k \tau (a ; \varphi (a)) + D \varphi (a) D_1^n \partial_2^{k + 1}
   \tau (a ; \varphi (a))] \end{align*}
hence we get the recursive relation
\begin{equation}
  d (k, m, n) = d (k, m - 1, n + 1) + \sum_{j = 0}^{m - 1} \binom{m - 1}{j}
  \varphi_{m - j} d (k + 1, j, n) \label{recursd}
\end{equation}
where $\varphi_l = \lim_{a \rightarrow \infty} D^l \varphi (a)$, assuming that
the limit is already known, and the boundary conditions
\begin{align*} \\
   d (k, 0, n) = \lim_{a \rightarrow \infty} D_1^n \partial_2^k \tau (a ;
   \varphi (a)) = \lim_{a \rightarrow \infty} \partial_2^k D_1^n \tau (a ;
   \varphi (a)) \\
   = B_n^{(1 - b)} \sum_{j = 0}^k \binom{k}{j} (- 1)^{k - j} (b + n - j)_j
   e^{- \gamma_b} \gamma_b^{b - 1 + n - j} \end{align*}
As for the integral term, we have
\begin{equation}
  \lim_{a \rightarrow \infty} \int_0^{\varphi (a)} D^n \tau (a ; s) \mathd s =
  B_n^{(1 - b)} \int_0^{\gamma_b} e^{- s} s^{b - 1 + n} \mathd s
\end{equation}
and
\begin{equation}
  \int_0^{\gamma_b} e^{- s} s^{b - 1 + n} \mathd s = - \sum_{k = 0}^{n - 1} (b
  + n - k)_k e^{- \gamma_b} \gamma_b^{b - 1 + n - k} + (b)_n (1-p) \Gamma (b)
\end{equation}
by repeated integrations by parts and the fact that $\int_0^{\gamma_b} e^{- s}
s^{b - 1} \mathd s = (1-p) \Gamma (b)$. We have got, then, for the left hand side
that
\begin{align*} &\lim_{a \rightarrow \infty} D^n \int_0^{\varphi (a)} \tau (a ; s) \mathd s \\
   &\quad=\lim_{a \rightarrow \infty} \sum_{k = 0}^{n - 1} D^k (\nobracket D \varphi
   (a) D^{n - k - 1}_1 f (a ; \varphi (a) \nobracket)) + \lim_{a
   \rightarrow \infty} \int_0^{\varphi (a)} D^n f (a ; s) \mathd s \\&\quad=
   \lim_{a \rightarrow \infty} \sum_{k = 0}^{n - 1} \sum_{j = 0}^k
   \binom{k}{j} D^{k - j + 1} \varphi (a) D^j [D_1^{n - k - 1} f (a ; \varphi
   (a))] \\&
   \qquad- B_n^{(1 - b)} \sum_{k = 0}^{n - 1} (b + n - k)_k e^{- \gamma_b}
   \gamma_b^{b - 1 + n - k} + (b)_n (1-p) \Gamma (b) B_n^{(1 - b)} \\&\quad
   = \sum_{k = 0}^{n - 1} \sum_{j = 0}^k \binom{k}{j} \varphi_{k - j + 1} d
   (0, j, n - k - 1) \\&\qquad- B_n^{(1 - b)} \sum_{k = 0}^{n - 1} (b + n - k)_k e^{-
   \gamma_b} \gamma_b^{b - 1 + n - k} + (1-p) \Gamma (b + n) B_n^{(1 - b)} \\&\quad
   = \varphi_n d (0, 0, 0) + \sum_{j = 1}^{n - 1} \binom{n - 1}{j} \varphi_{n
   - j} d (0, j, 0) \\&\qquad
   + \sum_{k = 0}^{n - 2} \sum_{j = 0}^k \binom{k}{j} \varphi_{k - j + 1} d
   (0, j, n - k - 1)\\&\qquad - B_n^{(1 - b)} \sum_{k = 0}^{n - 1} (b + n - k)_k e^{-
   \gamma_b} \gamma_b^{b - 1 + n - k} + (1-p) \Gamma (b + n) B_n^{(1 - b)} \\
   \end{align*}
We notice that the term $(1-p) \Gamma (b + n) B_n^{(1 - b)}$ cancels exactly with
the right hand side.
\subsection{Conclusion}
\begin{proof*}{Proof of Theorem \ref{theoreminf}}
Summing up, using the normalisation $\delta = \frac{d}{e^{- \gamma_b}
\gamma_b^{b - 1}}$, we are left with
\begin{align*}  \\
   \varphi_n = - \sum_{j = 1}^{n - 1} \binom{n - 1}{j} \varphi_{n - j} \delta
   (0, j, 0) - \sum_{k = 0}^{n - 2} \sum_{j = 0}^k \binom{k}{j} \varphi_{k - j
   + 1} \delta (0, j, n - k - 1) \\
   + B_n^{(1 - b)} \sum_{k = 0}^{n - 1} (b + n - k)_k \gamma_b^{n - k} \end{align*}
and $\delta$ and $\varphi$ also satisfying the recursive relation, by
\eqref{recursd},
\begin{align*} \delta (k, m, n) = \delta (k, m - 1, n + 1) + \sum_{j = 0}^{m - 1} \binom{m
   - 1}{j} \varphi_{m - j} \delta (k + 1, j, n) \end{align*}
We have the initial conditions
\begin{align*} \varphi_0 = \gamma_b \end{align*}
\begin{align*} \delta (k, 0, n) = B_n^{(1 - b)} \sum_{j = 0}^k \binom{k}{j} (- 1)^{k - j}
   (b + n - j)_j \gamma_b^{n - j} \end{align*}\end{proof*} \\
To prove Corollary \ref{corinf} we need the following lemma.
\begin{lemma}\label{lemasymexpbell}
	Let $f$ have asymptotic expansion $$f(x)=\sum_{k=0}^N\frac{a_k}{k!x^k}+r(x)$$ where $r(x)=\mathcal{O}(1/x^{N+1})$. Then, \[e^{f(x)}=e^{a_0}+e^{a_0}\sum_{k=1}^N\frac{\mathcal{B}_k(a_1,a_2,...,a_k)}{k!x^k}+\mathcal{O}(1/x^{N+1})\]
\end{lemma}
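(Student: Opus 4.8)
The plan is to factor out the constant term and recognise the polynomial part of the expansion as a genuine Taylor series whose coefficients are Bell polynomials, then absorb the remainder. First I would write $e^{f(x)} = e^{a_0} e^{g(x)}$, where $g(x) = f(x) - a_0 = \sum_{k=1}^N \frac{a_k}{k! x^k} + r(x)$, and split $g$ further as $g(x) = h(1/x) + r(x)$ with $h(y) = \sum_{k=1}^N \frac{a_k}{k!} y^k$. The point is that $h$ is an honest polynomial in the single variable $y$, so $e^{h(y)}$ is smooth at $y = 0$ and its Taylor coefficients can be computed exactly.

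Next, applying Remark \ref{remarkbell} to the composite $e^{h(y)}$ gives $\left(e^{h(y)}\right)^{(n)} = e^{h(y)} \mathcal{B}_n(h'(y), \ldots, h^{(n)}(y))$. Since $h(0) = 0$ and $h^{(j)}(0) = a_j$ for $1 \leq j \leq N$ (and $h^{(j)}(0) = 0$ for $j > N$), evaluating at $y = 0$ yields $\frac{d^n}{dy^n} e^{h(y)}\big|_{y=0} = \mathcal{B}_n(a_1, \ldots, a_n)$ for $n \leq N$. Hence the degree-$N$ Taylor expansion is $e^{h(y)} = \sum_{n=0}^N \frac{\mathcal{B}_n(a_1,\ldots,a_n)}{n!} y^n + \mathcal{O}(y^{N+1})$, and substituting $y = 1/x$ produces precisely the claimed polynomial part, with the $n = 0$ term $\mathcal{B}_0 = 1$ accounting for the leading $e^{a_0}$ after multiplication.

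The only remaining step, which I expect to require care rather than to be a genuine obstacle, is to handle the remainder $r(x)$. Since $r(x) = \mathcal{O}(1/x^{N+1}) \to 0$, we have $e^{r(x)} = 1 + \mathcal{O}(1/x^{N+1})$; and because $e^{h(1/x)}$ is bounded near infinity (it tends to $1$), the product satisfies $e^{g(x)} = e^{h(1/x)} e^{r(x)} = e^{h(1/x)} + \mathcal{O}(1/x^{N+1})$. Multiplying through by $e^{a_0}$ then completes the proof. The one subtlety is verifying that $e^{h(1/x)} \cdot \mathcal{O}(1/x^{N+1})$ is itself $\mathcal{O}(1/x^{N+1})$, which is immediate from the boundedness of $e^{h(1/x)}$; beyond this everything reduces to the elementary composition of a polynomial with the entire function $e^y$, so no deeper difficulty arises.
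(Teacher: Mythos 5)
Your proof is correct, and it differs from the paper's in the key step. Both arguments begin identically: factor off the remainder via $e^{r(x)} = 1 + \mathcal{O}(1/x^{N+1})$, so that everything reduces to expanding the exponential of the polynomial part in $1/x$ (you make explicit the boundedness of that factor, which the paper leaves implicit). The divergence is in how the Bell polynomials emerge. The paper writes $e^{\sum_{k=0}^N a_k/(k!x^k)} = e^{a_0}\prod_{k=1}^N e^{a_k/(k!x^k)}$, expands each factor as a truncated exponential series (with truncation indices $\lceil (N+1)/k - 1\rceil$), multiplies everything out, and identifies the coefficient of $1/x^n$ with the partition sum $\sum \prod_{k} a_k^{m_k}/((k!)^{m_k} m_k!)$ over $\sum_k k m_k = n$, i.e. it invokes the combinatorial definition of $\mathcal{B}_n$ directly. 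You instead substitute $y = 1/x$, observe that $h(y) = \sum_{k=1}^N a_k y^k / k!$ is an honest polynomial with $h(0)=0$ and $h^{(j)}(0) = a_j$ for $1 \leq j \leq N$, and read off the Taylor coefficients of the entire function $e^{h(y)}$ at $y=0$ from the Fa\`a di Bruno identity of Remark \ref{remarkbell}. Your route buys rigor cheaply: the error term is the standard Taylor remainder of a smooth function, and there is no truncation bookkeeping or combinatorial collection of terms --- that work is delegated to Remark \ref{remarkbell}, which the paper has already stated. The paper's route is more self-contained, working straight from the partition definition of $\mathcal{B}_n$ and in effect re-deriving the exponential Fa\`a di Bruno formula inside the proof; yours is the cleaner argument given what is already available in the text.
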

\begin{proof}
	We have
	\begin{align*}
	f(x)&=\sum_{k=0}^N\frac{a_k}{k!x^k}+r(x)\Rightarrow
	e^{f(x)-\sum_{k=0}^N\frac{a_k}{k!x^k}}=e^{r(x)}=1+\mathcal{O}(1/x^{N+1})\\
	\Rightarrow e^{f(x)}&=e^{\sum_{k=0}^N\frac{a_k}{k!x^k}}+\mathcal{O}(1/x^{N+1})=e^{a_0}\prod_{k=1}^Ne^{\frac{a_k}{k!x^k}}+\mathcal{O}(1/x^{N+1})\\
	&=e^{a_0}\prod_{k=1}^N
	 \left(1+\sum_{m=1}^{\left\lceil\frac{N+1}{k}-1\right\rceil}\frac{a_k^m}{m!k!^mx^{km}}+\mathcal{O}(1/x^{N+1})\right)+\mathcal{O}(1/x^{N+1})\\
	&=e^{a_0}\sum_{n=0}^N\frac{\mathcal{B}_n(a_1,a_2,...,a_n)}{n!x^n}+\mathcal{O}(1/x^{N+1})
	\end{align*}
	where the last equality is derived by a combinatorial argument, the coefficient of $1/x^n$ being the sum of products of the form $\prod_{k=1}^n \frac{a_k^{m_k}}{(k!)^{m_k}m_k!}$ such that $\sum_{k=1}^{n}km_k=n$, which defines the complete Bell polynomials.
\end{proof}

\begin{proof*}{Proof of Corollary \ref{corinf}}
	The proof is an immediate consequence of Theorem \ref{theoreminf} and the foregoing Lemma.
\end{proof*}

\section{Relations between Bell, Bernoulli and N{\o}rlund Polynomials}
In the course of trying to find the asymptotic expansion of $\varphi$ at
$\infty$, using Fa\`a di Bruno formulas, we encountered identities between Bell
polynomials and N{\o}rlund polynomials, that we have not been able to trace in
the literature, hence we state them in this section as a separate result.
\begin{proposition}
  Let $c \in \mathbbm{C}$. Then, the N{\o}rlund polynomial $B_n^{(c)}$ can be
  expressed as
  \begin{equation}
    B^{(c)}_n = \sum_{\{ \tmmathbf{m} \in \mathbbm{N}^n \mid \sum_{j =
    1}^n j m_j = n \}} \frac{n!}{m_1 !m_2 ! \ldots m_n !} \prod_{j =
    1}^n \left( \frac{(- 1)^{j + 1} c B_j}{j!j} \right)^{m_j}
  \end{equation}
  or, phrased in terms of Bell polynomials $\mathcal{B}_n$,
  \begin{equation}
    B^{(c)}_n = \mathcal{B}_n (c B_1, - c B_2 / 2, 0, - c B_4 / 4, 0,
    \ldots, - c B_n / n), \quad n > 1
  \end{equation}
  Moreover, we have that
  \begin{equation}
    (c - n)_n B_n^{(c)} = (- 1)^n \mathcal{B}_n (B_2 (c) - B_2, - B_3 (c) -
    B_3, \ldots, (- 1)^{n + 1} B_{n + 1} (c) - B_{n + 1} )
  \end{equation}
\end{proposition}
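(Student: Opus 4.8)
The plan is to treat all three identities as consequences of reading the defining generating functions as Taylor expansions and applying the Fa\`a di Bruno / Bell machinery already assembled in the paper, exactly in the spirit of Remark \ref{remarkratio}. For the two equivalent formulas for $B_n^{(c)}$ (the explicit sum and its Bell-polynomial repackaging), I would start from
\[
\left(\frac{x}{e^x-1}\right)^c = \exp\bigl(c\,g(x)\bigr), \qquad g(x)\assign\log\frac{x}{e^x-1},
\]
and read off $B_n^{(c)} = \frac{\mathd^n}{\mathd x^n}\bigl(\tfrac{x}{e^x-1}\bigr)^c\big|_{x=0}$. Since $g(0)=0$, the exponential form \eqref{formfaaexp} of Fa\`a di Bruno (equivalently the Bell-polynomial identity of Remark \ref{remarkbell}) gives $B_n^{(c)} = \mathcal{B}_n\bigl(cg'(0),\dots,cg^{(n)}(0)\bigr)$. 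It then remains to identify the Taylor coefficients of $g$: differentiating $g(x)=\log x-\log(e^x-1)$ and using $\frac{1}{e^x-1}=\sum_{m\geq 0}B_m\frac{x^{m-1}}{m!}$, I would obtain $g^{(j)}(0)=\frac{(-1)^{j+1}B_j}{j}$, where the sign is immaterial for odd $j\geq 3$ because $B_j=0$ there. Substituting these into the Bell polynomial gives the second displayed formula, and expanding $\mathcal{B}_n$ by its definition in Remark \ref{remarkbell} gives the first; thus the first two formulas are one computation.

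For the third identity I would not start afresh but reduce it to the Gamma-ratio computation already carried out for Lemma \ref{lemmarhs} and Remark \ref{remarkratio}. Setting $b=1-c$ there, the ratio $\frac{\Gamma(a)a^{1-c}}{\Gamma(a+1-c)}$ has, by \eqref{asratio} and Lemma \ref{lemD}, the limits $\lim_{a\to\infty}D^n\bigl(\tfrac{\Gamma(a)a^{1-c}}{\Gamma(a+1-c)}\bigr)=(1-c)_n B_n^{(c)}$. On the other hand, writing the ratio as $e^{V}$ with $V(a)=(1-c)\log a+\log\Gamma(a)-\log\Gamma(a+1-c)$ and applying the $D$-operator Fa\`a di Bruno formula \eqref{faaD} expresses the same limit as a Bell polynomial whose $j$-th argument is $\lim_{a\to\infty}D^jV$, a multiple of $B_{j+1}(1-c)-B_{j+1}$. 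Equating the two evaluations presents $(1-c)_n B_n^{(c)}$ as a Bell polynomial in these Bernoulli-polynomial differences. I would then convert this into the stated shape with two elementary substitutions: the reflection formula $B_{j+1}(1-c)=(-1)^{j+1}B_{j+1}(c)$, which turns each argument into $(-1)^{j+1}B_{j+1}(c)-B_{j+1}$, and the Pochhammer identity $(c-n)_n=(-1)^n(1-c)_n$, which rewrites the left-hand factor and produces the global sign $(-1)^n$.

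The routine parts are the Taylor expansion of $g$ and the Gamma-ratio asymptotics, both already available in the paper. The main obstacle I anticipate is the sign-and-normalization bookkeeping in the third identity: one must track exactly the constant in $\lim_{a\to\infty}D^jV$ (which carries the factor coming from the $\log\Gamma$ expansion \eqref{asloggamma}), and then thread the two sign flips — one from the Bernoulli reflection in each argument, one from the Pochhammer conversion on the left — so that the accumulated factor lands precisely as the statement requires. Care is also needed to justify equating the two evaluations of $\lim_{a\to\infty}D^n$, which rests on Lemma \ref{lemD} together with the fact, established in the proof of Lemma \ref{lemmarhs}, that the derivatives of the Gamma-ratio genuinely admit asymptotic expansions at infinity.
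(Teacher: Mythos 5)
Your proposal is correct and takes essentially the same route as the paper: the first two identities are obtained exactly as in the paper's proof, by reading $B_n^{(c)}$ as the $n$th derivative at $0$ of $e^{c\log(z/(e^z-1))}$ via Fa\`a di Bruno with the Taylor coefficients $(-1)^{j+1}B_j/j$ of the logarithm, and the third by specialising Remark \ref{remarkratio} at $b=1-c$ together with the reflection $B_n(1-x)=(-1)^n B_n(x)$ and the Pochhammer symmetry $(1-c)_n=(-1)^n(c-n)_n$. Your sign bookkeeping (including the observation that the sign is immaterial for odd $j\geq 3$ since $B_j=0$) matches the paper's computation, so nothing further is needed.
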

\begin{proof}
  The last equation is derived by Remark \ref{remarkratio}, and the symmetries
  $B_n (1 - x) = (- 1)^n B_n (x)$ and $(1 - c)_n = (- 1)^n (c - n)_n$. For the
  rest, by \eqref{norlund} we have
  \begin{align*} B_n^{(c)} = \lim_{z \rightarrow 0} \partial^n \left[ \left( \frac{z}{e^z
     - 1} \right)^c \right] \end{align*}
  By using Fa{\`a} di Bruno formula we get
  \begin{align*}
     &\partial^n \left[ \left( \frac{z}{e^z - 1} \right)^c \right] =
     \partial^n \left( e^{c \log \frac{z}{e^z - 1}} \right) \\
     &\quad= \left( \frac{z}{e^z - 1} \right)^c \sum_{\{ \tmmathbf{m} \in
     \mathbbm{N}^n \mid \sum_{j = 1}^n j m_j = n \}} \frac{n!}{m_1 !m_2
     ! \ldots m_n !} \prod_{j = 1}^n \left( \frac{c}{j!} \partial^j \left(
     \log \frac{z}{e^z - 1} \right) \right)^{m_j} \end{align*}
  and we have the limit
  \begin{align*} \lim_{z \rightarrow 0} \left( \frac{z}{e^z - 1} \right)^c = 1 \end{align*}
  and
  \begin{align*} - z \left( \log \frac{e^z - 1}{z} \right)' = - \frac{z e^z}{e^z - 1} + 1
     &= \sum_{n = 1}^{\infty} (- 1)^{n + 1} B_n \frac{z^n}{n!} \\\Rightarrow \log
     \frac{z}{e^z - 1} &= \sum_{n = 1}^{\infty} (- 1)^{n + 1} \frac{B_n}{n}
     \frac{z^n}{n!} \end{align*}
  hence
  \begin{align*} \lim_{z \rightarrow 0} \partial^j \left( \log \frac{z}{e^z - 1} \right) =
     (- 1)^{j + 1} \frac{B_j}{j} \end{align*}
  and thus, summing up,
  \begin{align*} B_n^{(c)} &= \lim_{a \rightarrow 0} \partial^n \left[ \left( \frac{z}{e^z
     - 1} \right)^c \right] \\&= \sum_{\{ \tmmathbf{m} \in \mathbbm{N}^n
     \mid \sum_{j = 1}^n j m_j = n \}} \frac{n!}{m_1 !m_2 ! \ldots m_n
     !} \prod_{j = 1}^n \left( \frac{(- 1)^{j + 1} c B_j}{j!j} \right)^{m_j}
  \end{align*}
  which concludes the proposition.
\end{proof}

\appendix
\section*{Appendix}

In this appendix, we provide code in Maple and Sage for computing the terms
of asymptotic expansion of $\varphi$ and $q$ at infinity recursively.

\section{Maple code}

In the first algorithm, the procedure \texttt{phiinf(n)} computes what we
define as $\varphi_n$ in Theorem \ref{theoreminf}.
\begin{algorithm} \quad\\
\begin{lstlisting}
    norl:=proc(n,c);
		if n=0 then
	        return 1;
        else
            return (-1)^n*CompleteBellB(
    n,seq(-c*bernoulli(j)/j,j=1..n)
            );
        end if;
    end proc;
    phiinf:=proc(n) option remember;
	    if n=0 then
		    return gamma[b];
	    end if;
        for k from 1 to n-1 do
           phiinf(k);
        od;
        return expand(
    -add(binomial(n-1,j)*phiinf(n-j)*delta(0,j,0),j=1..n-1)
    -add(add(binomial(k,j)*phiinf(k-j+1)*delta(0,j,n-k-1)
    ,j=0..k),k=0..n-2)+norl(n,1-b)*add(pochhammer(b+n-k,k)
    *phiinf(0)^(n-k),k=0..n-1);
    end proc;
    delta:=proc(k,m,n) option remember;
	    if m=0 then
		    return simplify(
    norl(n,1-b)*add(binomial(k,j)*(-1)^(k-j)*
    pochhammer(b+n-j,j)*phiinf(0)^(n-j),j=0..k)
            );
	    end if;
        return simplify(delta(k,m-1,n+1)+
    add(binomial(m-1,j)*phiinf(m-j)*delta(k+1,j,n),j=0..m-1));
    end proc;
  \end{lstlisting}
\end{algorithm}
In the second algorithm, the procedure \texttt{qinf(n)} computes the nth
coefficient of the asymptotic expansion of $q$ in Corollary \ref{corinf}.
\begin{algorithm} \quad\\
\begin{lstlisting}
qinf:=proc(b,n);
   if n=0 then 
      return 1; 
   else 
      return CompleteBellB(
      n,seq((-1)^(k+1)*phiinf(k),k=0..n-1))/n!;
   end if;
end proc;
\end{lstlisting}
\end{algorithm}
\section{Sage code}

The function \texttt{phiinf(n)} computes what we define as $\varphi_n$ in
Theorem \ref{theoreminf}.
\begin{algorithm} \quad\\
\begin{lstlisting}
gamma_b=var('gamma_b')
    b=var('b')
    def norlund(n,c):
       if n==0:
           return 1
       else:
           return
    (-1)^n*sum(bell_polynomial(n,k)([-c*bernoulli(j)/j for j in
    [1..n-k+1]]) for k in [1..n])
    @CachedFunction
    def phiinf(n):
       if n==0:
           return gamma_b
       else:
           return expand(-sum(binomial(n-1,j)*phiinf(n-j)*
    delta(0,j,0) for j in [1..n-1])-sum(sum(binomial(k,j)*
    phiinf(k-j+1)*delta(0,j,n-k-1) for j in [0..k]) for k 
    in [0..n-2])+norlund(n,1-b)*sum(rising_factorial(b+n-k,k)*
    phiinf(0)^(n-k) for k in [0..n-1]))
    @CachedFunction
    def delta(k,m,n):
		if m==0:
           return
    simplify(norlund(n,1-b)*sum(binomial(k,j)*(-1)^(k-j)*
    rising_factorial(b+n-j,j)*phiinf(0)^(n-j) for j in [0..k]))
	    else:
           return
    simplify(delta(k,m-1,n+1)+sum(binomial(m-1,j)
    *phiinf(m-j)*delta(k+1,j,n)
    for j in [0..m-1]))
  \end{lstlisting}
\end{algorithm}
The function \texttt{qinf(n)} computes the nth coefficient of the asymptotic
expansion of $q$ in Corollary \ref{corinf}.
\begin{algorithm} \quad\\
\begin{lstlisting}
    def qinf(n):
	    return sum(bell_polynomial(n,j)(
	    [(-1)^(k+1)*phiinf(k)
    for k in [0..n-j]]) for j in [1..n])/factorial(n)
\end{lstlisting}
\end{algorithm}
\bibliographystyle{plain}
\bibliography{betabib}

\end{document}